%%%% Tohoku Mathematical Journal - March 15, 2013 Ver.6 - %%%%
\documentclass[leqno,12pt]{amsart} % leqnoで数式番号を左側へ配置
\setlength{\textheight}{23cm}
\setlength{\textwidth}{16cm}
\setlength{\oddsidemargin}{0cm}
\setlength{\evensidemargin}{0cm}
\setlength{\topmargin}{0cm}
\usepackage{amssymb}
\usepackage{enumerate}
\usepackage{color}
\usepackage{graphicx}
\usepackage[normalem]{ulem} 
\usepackage{url}
%
 % 行間をダブルスペース
%
%%%%%%%%% Theorem-like環境の定義 %%%%%%%%%%%
% [定理型環境内の文章中の書体がイタリック体]
\theoremstyle{plain} % イタリック体
\newtheorem{theorem}{\indent\sc Theorem}[section] % 見出しはスモールキャップ

% [定理型環境内の文章中の書体がローマン体]
\theoremstyle{definition} % ローマン体に変更

\newtheorem{remark}[theorem]{\indent\sc Remark}
\newtheorem{example}[theorem]{\indent\sc Example}

%
% (注) 定理等の見出しに通し番号を付けない場合
% \newtheoremのあとに*を付け, [theorem]を削除

%
%%%%% 証明 %%%%%
 % 見出しはスモールキャップ
% ((証明は以下のコマンドを利用))
% \begin{proof}
% \end{proof}
% 証明終わりの記号は白抜き四角
%%%%%%%%%%%%%%%%%%%%%%%%%%%%%%%%%%%%%%%%%%%%%%%%%%%%%%%%%%%%%%

\begin{document}

\title[Surfaces of Revolution with Prescribed Mean and Skew Curvatures]{Surfaces of Revolution with Prescribed Mean and Skew Curvatures in Lorentz-Minkowski Space} % 論文タイトル []内は柱用(50文字以内)

\author[L. C. B. da Silva]{Luiz C. B. da Silva}%$^*$} % 第一著者名 []内は柱用

%\author[S. Author]{Second Author} % 第二著者名 []内は柱用

\dedicatory{This is a pre-print version of the article published as [Da Silva, {Tohoku Math. J.} \textbf{73} (2021), 317--339] whose full-text is available in \url{https://doi.org/10.2748/tmj.20190729}.}

%%%%%%%%%%%%%%%%%%% 脚注 %%%%%%%%%%%%%%%%%%%%%%%%%%%%%%
\subjclass[2010]{Primary 53A10; Secondary 53A55, 53B30.
}
% \subjclass[2010]コマンドが効かない(amsart.clsのバージョンが古い)場合は
% 以下を利用すること
%\renewcommand{\thefootnote}{\fnsymbol{footnote}}
%\footnote[0]{2010\textit{ Mathematics Subject Classification}.
%Primary 00; Secondary 00.}

\keywords{ % keywords
Lorentz-Minkowski space, surface of revolution, skew curvature, mean curvature, Lorentz number.
}

%\thanks{ % Thanks
%$^*$Financially supported by the Conselho Nacional de Desenvolvimento Tecnol\'ogico e Cient\'ifico - CNPq (Brazilian agency).
% 科研費補助　基盤研究(A)の場合
%}
%%%%%%%%%%%% 著者所属 %%%%%%%%%%%%%

\address{ % 第一著者
Departamento de Matem\'atica \endgraf % 適当な箇所で改行
Universidade Federal de Pernambuco \endgraf
Recife, PE 50670-901, Brazil.
}
\curraddr{ % 第一著者
\textsc{Department of Physics of Complex Systems} \endgraf % 適当な箇所で改行
\textsc{Weizmann Institute of Science} \endgraf
\textsc{Rehovot 7610001, Israel.}
}
\email{luiz.da-silva@weizmann.ac.il}

%%%%%%%%%%%%%%%%%%%%%%%%%%%%%%%%%%%%%%%%%%%%%%%%%%%%%%%

\begin{abstract}
In this work, we investigate the problem of finding surfaces in the Lorentz-Minkowski 3-space with prescribed skew {($S$)} and mean {($H$)} curvatures, which are defined through the discriminant of the characteristic polynomial of the shape operator and its trace, respectively. After showing that {$H$} and {$S$} can be interpreted in terms of the expected value and standard deviation of the normal curvature seen as a random variable, we address the problem of prescribed curvatures for  surfaces of revolution. For surfaces with a non-lightlike axis and prescribed {$H$}, the strategy consists in rewriting the equation for {$H$}, which is initially a nonlinear second order Ordinary Differential Equation (ODE), as a linear first order ODE with coefficients in a certain ring of hypercomplex numbers along the generating curves: complex numbers for curves on a spacelike plane and Lorentz numbers for curves on a timelike plane. We also solve the problem for surfaces of revolution with a lightlike axis by using a certain ODE with real coefficients. On the other hand, for the skew curvature problem, we rewrite the equation for {$S$}, which is initially a nonlinear second order ODE, as a linear first order ODE with real coefficients. In all the problems, we are able to find the parameterization for the generating curves in terms of certain integrals of {$H$} and {$S$}.
\end{abstract}

\maketitle

\section*{Introduction}

The problem of finding surfaces with prescribed mean ($H$) or Gaussian ($K$) curvatures is very  important in Differential Geometry. In general, one is led to the study of a non-linear PDE: a nonlinear elliptic PDE of Hessian type  for $K$ \cite{Gutierrez2001,SmithEM2015}, also known as Monge-Amp\`ere equation; and a nonlinear elliptic PDE of divergent type for $H$ \cite{GilbargTrudinger1977}. However, for surfaces invariant by a 1-parameter subgroup of isometries \cite{DoCarmoTohoku1982} this problem is easier and reduces to that of solving a certain non-linear second order ODE \cite{BaikoussisJGeom,DoCarmoTohoku1982,KemmotsuTMJ1981}.  Similar results are also found for surfaces in Lorentz-Minkowski geometry \cite{BenekiJMAA2002,HanoMA1983,HanoTMJ1984,IshiharaJMTU1988} and in other ambient spaces as well \cite{CaddeoMM1995,CaddeoBUMI1996,MontaldoJGP2005,YoonIJM2013}. If we write $H$ and $K$ in terms of the principal curvatures $\kappa_1$ and $\kappa_2$, the problem reduces to finding surfaces with a prescribed sum and product of the principal curvatures. On the other hand, the difference $\kappa_1-\kappa_2$ seems to be given less attention. In this work, we are interested in the study of the mean curvature $H$ and \emph{skew curvature} $S=\sqrt{H^2-\epsilon K}$ in Lorentz-Minkowski space, where the parameter $\epsilon$ is $-1$ for a space-like surface and $+1$ for a time-like one (in Euclidean space $S$ is just $S=\sqrt{H^2-K}$). If the shape operator is diagonalizable, then the skew curvature may be written as $S=\vert \kappa_1-\kappa_2\vert$, while the mean curvature is half the sum $H=\frac{1}{2}(\kappa_1+\kappa_2)$ (in Lorentz-Minkowski space it is possible to have $H^2-\epsilon K<0$, in which case the shape operator is no longer diagonalizable \cite{LopezIEJG2014} and $S$ may be chosen in a way that $\mathrm{i}\,S<0$). 

In the 1960s, B.-Y. Chen obtained global results for Euclidean closed surfaces related with the integrated skew curvature \cite{ChenMJOU1969} (named by him as {the} \emph{difference curvature}). In the {1970s}, the skew curvature was independently reintroduced in Euclidean space by T. K. Milnor as an auxiliary tool in the study of open surfaces \cite{MilnorJDG1976}. Later, she investigated properties of some quadratic forms defined by using the skew curvature \cite{MilnorPAMS1977}. It is worth mentioning that at an umbilic point the skew curvature vanishes and, consequently, it can work as a measure of the surface bend anisotropy. Indeed, we shall see that the mean and skew curvatures can be associated with the expected value and standard deviation of the normal curvature seen as a random variable, see Eq. (\ref{eq::StatInterprationSandH}). In addition, the behavior of $S$ can be related {to} the diagonalizability of the shape operator, a problem that does make sense in non-Riemannian geometry { \cite{Clelland2017,DaSilvaArXiv2018,FujiokaASPM2008,LopezIEJG2014} }. Moreover, in {a 3-dimensional space form of curvature $c$} it is valid the relation $H^2-K\geq-c$, with  equality valid for totally umbilical surfaces only \cite{ChenGMJ1996}. {(A similar expression holds in semi-Riemannian space forms as well. However, for timelike surfaces, i.e., $\epsilon=+1$, the equality $H^2-K=-c$ does not imply umbilicity in general \cite{FujiokaASPM2008}.)} Finally, let us mention that the square of the skew curvature appears in the study of the Willmore functional $W=\int(H^2-K)\mathrm{d}{A}$ \cite{Willmore1992} and also as a geometry-induced potential in the context of the quantum dynamics of a particle constrained to move on a surface in Euclidean space \cite{DaCostaPRA1981,DaSilvaAP2017}. 

Recently, surfaces in Euclidean space
with constant skew curvature were investigated \cite{LiDGA2015,TodaCSkC} and the problem of finding surfaces of revolution with prescribed skew curvature was solved in the context of a quantum constrained dynamics { \cite{DaSilvaAP2017} }. In this work, we shall address the problem of finding surfaces of revolution with prescribed skew curvature in Lorentz-Minkowski space following similar techniques to those of Ref. \cite{DaSilvaAP2017}. In addition, we also revisit the problem of finding Lorentzian surfaces of revolution with prescribed mean curvature by using the complex numbers and the so-called Lorentz numbers (also known as double numbers, see supplement C of Ref. \cite{Yaglom1979}), which constitutes a natural generalization of the technique employed in Euclidean space \cite{KemmotsuTMJ1981} and can allow for a better understanding of the results found in Lorentz-Minkowski space \cite{HanoTMJ1984,IshiharaJMTU1988}. 

This work is divided as follows. In Section 1 we present the fundamentals of the geometry in Lorentz-Minkowski space along with the classification of rotations, an Euler theorem for the normal curvature $\kappa_n$, and a statistical interpretation of $H$ and $S$ as an expected value and standard deviation of $\kappa_n$, respectively. In Section 2 we address the problem of finding surfaces of revolution with prescribed mean curvature: the surfaces with a non-lightlike axis are described in Subsections 2.1., 2.2., and 2.3, whose solution of the prescribed $H$ problem is analyzed in 2.4; and, in Subsection 2.5, we solve the problem for surfaces of revolution with a lightlike axis. Finally, in Section 3, we solve the   prescribed skew curvature problem for surfaces of revolution with a non-lightlike axis. In Appendix A, we present the ring of Lorentz numbers which constitute an important tool in Section 2. 

The present author would like to thank useful discussions with Renato T. Gomes (from Universidade Federal Rural de Pernambuco, Recife, Brazil).

%%%%%%%%%%%%%
\section{Differential geometric background}

We now present some geometric preliminaries and also establish an Euler theorem for the normal curvature. {This leads} to a statistical interpretation for $H$ and $S$ which qualifies them as appropriate quantities in the study of the extrinsic behavior of a surface. In later sections,  we shall study all the basic types of surfaces of revolution in $\mathbb{E}_1^3$ (see Table 1) and show how to find surfaces of revolution with prescribed mean or skew curvature. Both problems shall be solved by conveniently rewriting the respective curvature equations in terms of certain linear ODE's.

Let us denote by $\mathbb{E}_1^3$ the 3-dimensional Lorentz-Minkowski space, i.e., the vector space $\mathbb{R}^3$ equipped with the index one metric
\begin{equation}
\Big\langle (u_1,u_2,u_3),(v_1,v_2,v_3)\Big\rangle = u_1v_1+u_2v_2-u_3v_3\,.
\end{equation}
On the other hand, we shall denote the usual Euclidean space by $\mathbb{E}^3$.
%\section{Rotations in Lorentz-Minkowski space}

In $\mathbb{E}_1^3$ we may introduce the concept of \emph{causal character} as follows: we say that $v\in\mathbb{E}_1^3$ is (i) \emph{spacelike}, (ii) \emph{timelike}, or (iii) \emph{lightlike} if (i) $\langle v,v\rangle>0$ or $v=0$, (ii) $\langle v,v\rangle<0$, or (iii) $\langle v,v\rangle=0$ and $v\not=0$, respectively. Given a regular parameterized curve $\alpha:I\to\mathbb{E}_1^3$, i.e., $\alpha'\not=0$, we say that $\alpha$ is a \emph{spacelike}, \emph{timelike}, or \emph{lightlike} curve if $\alpha'$ is spacelike, timelike, or lightlike in $I$, respectively (for space- or time-like curves we may introduce an arc-length parameter $s$ as usual: $s=\int\sqrt{\vert\langle\alpha'(t),\alpha'(t)\rangle\vert}\,\mathrm{d}t$). On the other hand, for a regular surface $\Sigma$ given by a parameterization $X:U\to\Sigma\subset\mathbb{E}_1^3$, we say that $\Sigma$ is a \emph{spacelike}, \emph{timelike}, or \emph{lightlike} surface if the induced metric $p\mapsto\langle\cdot,\cdot\rangle\vert_{T_p\Sigma}$ is Riemannian, Lorentzian (non-degenerate with index 1), or degenerate with rank 1, respectively.

A \emph{rotation} in $\mathbb{E}_1^3$ is an isometry leaving a certain straight line pointwise fixed, known as {the} \emph{rotation axis}. Rotations are completely determined by the causal character of the respective rotation axis \cite{LopezIEJG2014}. In this way, it suffices to consider the three cases below:
\begin{enumerate}[(a)]
\item \textit{timelike axis}: supposing that the axis is $(0,0,1)$, we have
\begin{equation}
T_{\theta} = \left(
\begin{array}{ccc}
\cos\theta & -\sin\theta & 0\\
\sin\theta & \cos\theta & 0\\
0 & 0 & 1\\
\end{array}
\right),\, \theta\in\mathbb{S}^1;
\end{equation}
\item \textit{spacelike axis}: supposing that the axis is $(1,0,0)$, we have
\begin{equation}
S_{\theta} = \left(
\begin{array}{ccc}
1 & 0 & 0 \\
0 & \cosh\theta & \sinh\theta\\
0 & \sinh\theta & \cosh\theta\\
\end{array}
\right),\,\theta\in\mathbb{R}\,; \mbox{ and }
\end{equation}
\item \textit{lightlike axis}: supposing that the axis is $(0,1,1)$, we have
\begin{equation}
L_{\theta} = \left(
\begin{array}{ccc}
1 & \theta & -\theta \\
-\theta & 1-\frac{\theta^2}{2} & \frac{\theta^2}{2}\\[5pt]
-\theta & -\frac{\theta^2}{2} & 1+\frac{\theta^2}{2}\\
\end{array}
\right),\,\theta\in\mathbb{R}\,.
\end{equation}
\end{enumerate}

Apart from subsection 1.1, in this work we shall be interested in surfaces of revolution only, i.e., surfaces $\Sigma$ invariant by $T_{\theta}$, $S_{\theta}$, or $L_{\theta}$: e.g., for a timelike axis, $\Sigma=T_{\theta}(\Sigma)$ for all $\theta$. Here, the whole surface can be obtained by rotating a curve, the \emph{generating curve}, that can be assumed to be contained in a plane (which also contains the axis, Figure 1).

The many possibilities for the causal characters of the rotation axis in combination with the causal characters of the generating curve and the plane where it is contained in give rise to various types of surfaces of revolution in $\mathbb{E}_1^3$, as will become clear in the following (see Table 1 and Figure 1).

\begin{table}[h]
\centering
\large
\begin{tabular}{|p{2cm}|p{2cm}|p{2cm}|p{2cm}|}
\hline
Axis & Plane  & Curve & Surface  \\ \hline \hline
time  & time  & time  & time  \\ \cline{3-4}
      &       & space & space \\ \hline 
space & time  & time  & time  \\ \cline{3-4}
      &       & space & space \\ \cline{2-4}
      & space & space & time \\ \hline
light & time  & time  & time  \\ \cline{3-4}
      &       & space & space \\ \hline
\end{tabular}
\vspace{10pt}
\caption{Causal characters of surfaces of revolution in terms of the causal characters of the rotation axis, the plane that contains the generating curve, and the generating curve (the only lightlike surfaces of revolution are lightlike planes and light cones \cite{InoguchiIJGMMP2009}).}
\label{tab:adicaoZ4}
\end{table}

Due to the intimate relationship between the causal characters of a vector subspace $V\subset\mathbb{E}_1^3$ and its orthogonal complement $V^{\perp}$ induced by $\langle\cdot,\cdot\rangle$, Prop. 1.1 of \cite{LopezIEJG2014}, if a surface $\Sigma$ admits a unit normal vector field $N$ (in particular, $\Sigma$ is not lightlike), then we have
\begin{equation}
\epsilon = \langle N,N\rangle\Rightarrow \epsilon=\left\{
\begin{array}{cl}
-1\,, &\mbox{ if } { \Sigma }   \mbox{ is spacelike}\\
+1\,, &\mbox{ if } { \Sigma }   \mbox{ is timelike}\\
\end{array}
\right..
\end{equation}
In local coordinates $X:U\subset\mathbb{R}^2\to \Sigma\subset\mathbb{E}_1^3$, $X=X(u,v)$, the normal can be written as
\begin{equation}
N = \frac{X_u\times X_v}{\sqrt{\vert X_u\times X_v\vert}},
\end{equation}
where $\times$ is the cross product in $\mathbb{E}_1^3$: 
\begin{equation} 
(u_1,u_2,u_3)\times(v_1,v_2,v_3)=(u_2v_3-u_3v_2,-(u_1v_3-u_3v_1),-(u_1v_2-u_2v_1)).
\end{equation}

The coefficients $g_{ij}$ and $h_{ij}$ of the \emph{first} and \emph{second fundamental forms} of $\Sigma$ are defined as 
$g_{11} = \langle X_u,X_u\rangle,\, g_{12} = \langle X_u,X_v\rangle,\,g_{22} = \langle X_v,X_v\rangle$, 
and $h_{11} = \langle X_{uu},N\rangle,\, h_{12} = \langle X_{uv},N\rangle,\,h_{22} = \langle X_{vv},N\rangle$, respectively. It is worth mentioning that $-\epsilon=\mbox{sgn}(g):=\mbox{sgn}(\det\,g_{ij})$ and, therefore, $g>0$ for a spacelike surface and $g<0$ for a timelike one (if $\Sigma$ is lightlike, then $g=0$) \cite{LopezIEJG2014}. 

Finally, in local coordinates the \emph{Gaussian} $K$ and \emph{mean} $H$ \emph{curvatures} are \cite{LopezIEJG2014}
\begin{equation}
K=\epsilon\,\frac{h_{11}h_{22}-h_{12}^2}{g_{11}g_{22}-g_{12}^2} \mbox{ and }H=\frac{\epsilon}{2}\,\frac{g_{11}h_{22}-2g_{12}h_{12}+g_{22}h_{11}}{g_{11}g_{22}-g_{12}^2},
\end{equation}
while the \emph{skew curvature} $S$ is
\begin{equation}
S = \sqrt{H^2-\epsilon K}.
\end{equation}
By convention, if $H^2-\epsilon K<0$, we may choose $S$ in a way that $\mathrm{i}\,S<0$. {For surfaces of revolution, however, we do not need to worry about such a possibility. Indeed, since the shape operator $A_p=-\mathrm{d}N$ is always diagonalizable for surfaces of revolution \cite{FujiokaASPM2008} (see also the explicit computations in Sect. \ref{sec::PrescMC}) and since the discriminant of the characteristic polynomial of $A_p$ is precisely $4(H^2-\epsilon K)$ \cite{LopezIEJG2014}, it follows that $S^2=H^2-\epsilon K\geq0$ here.}
\begin{remark}
In \cite{Clelland2017}, the skew curvature is defined to be $\sqrt{H^2-K}$ and denoted by $H'$. Here, we shall denoted it by $S$ (from skew) instead of $H'$ in order to avoid confusion with {the} derivative of $H$. In addition, since $4(H^2-\epsilon K)$ is the discriminant of the characteristic polynomial of the shape operator \cite{LopezIEJG2014}, it seems to be more natural to define $S$ the way we do. Finally, we believe that the results to be presented in the subsection below and also in Section 3, for {surfaces of revolution} with non-lightlike axis, will show that our definition allows for a suitable use of the skew curvature concept.
\end{remark}

\subsection{Euler theorem and statistical interpretation of the mean and skew curvatures}

Since $S$ vanishes at an umbilic point, it can be thought to be a measure of the surface bend anisotropy. Indeed, we shall show below that the mean and skew curvatures are respectively given in terms of the expected value and standard deviation of the normal curvature $\kappa_n$, when we see $\kappa_n$ as a random variable. This suggests that $H$ and $S$ together are appropriate quantities to {offer} a glimpse of the extrinsic behavior of a surface.

Let $\Sigma$ be a regular parameterized surface, not necessarily of revolution, and $p\in\Sigma$. The normal curvature $\kappa_n$ at $p$ is a real function over the set of unit tangent vectors, i.e., $\kappa_n:\mathbb{S}^1 { \subset T_p\Sigma } \to\mathbb{R}$ for a spacelike surface or $\kappa_n:\mathbb{S}_1^1\cup\mathbb{H}^1 { \subset T_p\Sigma }\to\mathbb{R}$ for a timelike one. We assume, in addition, that the shape operator $A_p=-\mathrm{d}N$ at $p\in\Sigma$ has non-lightlike eigenvectors $\mathbf{u}_i$ ($i=1,2$)  with eigenvalues $\kappa_i$ ($i=1,2$). In particular, we are assuming that $A_p$ is diagonalizable, in which case the mean and skew curvatures can be written as $H=\frac{\epsilon}{2}(\kappa_1+\kappa_2)$ and $S=\sqrt{(\kappa_1-\kappa_2)^2}$. (This is the case for {surfaces of revolution}.) 

If $\Sigma$ is spacelike, the induced metric is Riemannian and then we can write any unit tangent vector $\mathbf{v}$ at $p$ as
\begin{equation}
\mathbf{v} = \cos\phi\,\mathbf{u}_1+\sin\phi\,\mathbf{u}_2,\, \phi\in\mathbb{S}^1.
\end{equation}
This leads to the following Euler theorem
\begin{equation}
\kappa_n(p,\mathbf{v})=\langle A_p\,\mathbf{v},\mathbf{v}\rangle=\cos^2(\phi)\,\kappa_1+\sin^2(\phi)\,\kappa_2.\label{eq::EulerFormulaSpacelikeSurf}
\end{equation}
Notice this is the same expression we would obtain for a surface $\Sigma$ in Euclidean space.

On the other hand, if $\Sigma$ is timelike, the induced metric is Lorentzian and then we can write any unit tangent vector $\mathbf{v}$ at $p$ as
\begin{equation}
\mathbf{v} = \left\{
\begin{array}{r}
\pm\cosh\phi\,\mathbf{u}_1+\sinh\phi\,\mathbf{u}_2, \mbox{ if }\langle\mathbf{v},\mathbf{v}\rangle=+1\\
\sinh\phi\,\mathbf{u}_1\pm\cosh\phi\,\mathbf{u}_2,\mbox{ if }\langle\mathbf{v},\mathbf{v}\rangle=-1\\
\end{array}
\right.,\,\phi\in\mathbb{R},
\end{equation}
where we are assuming for simplicity that $\mathbf{u}_1$ is the spacelike eigenvector and $\mathbf{u}_2$ is the timelike one. 
This leads to the following Euler theorem
\begin{equation}
\kappa_n(p,\mathbf{v})=\left\{
\begin{array}{c}
\cosh^2(\phi)\,\kappa_1-\sinh^2(\phi)\,\kappa_2, \mbox{ if }\langle\mathbf{v},\mathbf{v}\rangle=+1\\
\sinh^2(\phi)\,\kappa_1-\cosh^2(\phi)\,\kappa_2,\mbox{ if }\langle\mathbf{v},\mathbf{v}\rangle=-1\\
\end{array}
\right..\label{eq::EulerFormulaTimelikeSurf}
\end{equation}

Now, pretending $\kappa_n$ is a random variable, there  are two important parameters naturally associated with it, namely the expected value ${\langle\kappa_n\rangle}$ and the standard deviation ${\sqrt{\langle(\Delta\kappa_n)^2\rangle}}$. Then, if $\Sigma$ is spacelike ($\epsilon=-1$),  we can use Eq. (\ref{eq::EulerFormulaSpacelikeSurf}) to establish the following relation for the expected value of $\kappa_n$ with respect to the uniform (probability) density $\frac{\mathrm{d}\phi}{2\pi}$ 
\begin{equation}
{\langle\kappa_n\rangle} = \int_{0}^{2\pi}\kappa_n(\phi)\,\frac{\mathrm{d}\phi}{2\pi}=\frac{\kappa_1+\kappa_2}{2}\Rightarrow H=-{\langle\kappa_n\rangle}. 
\end{equation}
In addition, the standard deviation is
\begin{equation}
{\sqrt{\langle(\Delta\kappa_n)^2\rangle}}=\sqrt{\int_{0}^{2\pi}[\kappa_n(\phi)-{\langle\kappa_n\rangle}]^2\,\frac{\mathrm{d}\phi}{2\pi}}=\sqrt{\frac{(\kappa_1-\kappa_2)^2}{8}}\Rightarrow S =2\sqrt{2}\,{\sqrt{\langle(\Delta\kappa_n)^2\rangle}}. 
\end{equation}
It is worth mentioning that in Euclidean space we would find analogous results for $\kappa_n$, namely ${\langle\kappa_n\rangle}=H$ and $S=2\sqrt{2}\,{\sqrt{\langle(\Delta\kappa_n)^2\rangle}}$, as can be easily verified.

The results above for surfaces with an induced metric of Riemannian signature suggest that we can replace the continuous distribution $\kappa_n$ by a discrete one taking the possible values  $\kappa_1$ or $\kappa_2$, since in this case
\begin{equation}
{\langle\kappa_n\rangle}=\frac{1}{2}\sum_{i=1}^2\kappa_i=\frac{\kappa_1+\kappa_2}{2}
{ \mbox{ and } }
{\sqrt{\langle(\Delta\kappa_n)^2\rangle}}=\sqrt{\frac{1}{2-1}\sum_{i=1}^2(\kappa_i-{\langle\kappa_n\rangle})^2}=\sqrt{\frac{1}{2}(\kappa_1-\kappa_2)^2}\,.
\end{equation}

Now, let $\Sigma$ be a timelike surface ($\epsilon=+1$). Due to symmetry considerations, we restrict ourselves to what happens along a single branch of the pair of hyperbolas associated with $\mathbb{S}_1^1\cup\mathbb{H}^1$, say the branch parameterized by $\phi\mapsto(\cosh\phi,\sinh\phi)$. Observe that, unlike the spacelike case, here the integrals $\int_{\mathbb{R}}\cosh^2\phi\,\mathrm{d}\phi$ and $\int_{\mathbb{R}}\sinh^2\phi\,\mathrm{d}\phi$ do not converge and, consequently, $\kappa_n$ fails to have a finite expected value. {The averages may diverge even if we divide $\int_{-a}^af(\phi)\mathrm{d}\phi$ by the length of the region of integration, $2a$, and take $a\to\infty$. To obtain something meaningful, we may consider only the finite part of the limit, i.e., define} 
\begin{equation}
   \langle\kappa_n\rangle = \mbox{finite part}\left(\lim_{a\to\infty}\frac{1}{2a}\int_{-a}^{+a}\kappa_n(\phi)\mathrm{d}\phi\right).
\end{equation}

{We have}
\begin{equation}
    \left\langle\left(\frac{\mathrm{e}^{\phi}\pm\mathrm{e}^{-\phi}}{2}\right)^2\right\rangle_a = \frac{1}{2a}\int_{-a}^{+a}\frac{\mathrm{e}^{2\phi}\pm2+\mathrm{e}^{-2\phi}}{4}\mathrm{d}\phi=\pm\frac{1}{2}+\frac{\sinh(2a)}{4a}\stackrel{a\gg1}{\longrightarrow}\pm\frac{1}{2}+\frac{\mathrm{e}^{2a}}{4a}.
\end{equation}
{Taking into account the finite contributions only, the expected value of $\kappa_n$ is given by}
\begin{equation}
    \langle\kappa_n\rangle=\langle\kappa_1\cosh^2\phi-\kappa_2\sinh^2\phi\rangle=\frac{\kappa_1+\kappa_2}{2}=H.
\end{equation}

{On the other hand, to find the standard deviation, we first compute}
\begin{equation}
    \left\langle\left(\frac{\mathrm{e}^{\phi}\pm\mathrm{e}^{-\phi}}{2}\right)^4\right\rangle_a = \frac{1}{2a}\int_{-a}^{+a}\left(\pm\frac{1}{2}+\frac{\cosh2\phi}{2}\right)^2\mathrm{d}\phi=\frac{3}{8}+\frac{\sinh2a}{4a}+\frac{\sinh4a}{32a}
\end{equation}
{and}
\begin{equation}
    \left\langle\cosh^2\phi\sinh^2\phi\right\rangle_a = \frac{1}{2a}\int_{-a}^{+a}\left(\frac{1}{2}+\frac{\cosh2\phi}{2}\right)\left(-\frac{1}{2}+\frac{\cosh2\phi}{2}\right)\mathrm{d}\phi=-\frac{1}{8}+\frac{\sinh4a}{32a}.
\end{equation}
{Taking into account the finite contributions only,}
\begin{equation}
\langle(\Delta\kappa_n)^2\rangle= \langle\kappa_1^2\cosh^4\phi-2\kappa_1\kappa_2\cosh^2\phi\sinh^2\phi+\kappa_2^2\sinh^4\phi\rangle-\left(\frac{\kappa_1+\kappa_2}{2}\right)^2=\frac{1}{8}(\kappa_1-\kappa_2)^2.
\end{equation}
{Finally, the standard deviation of the normal curvature is given by}
\begin{equation}
    \sqrt{\langle(\Delta\kappa_n)^2\rangle} = \sqrt{\frac{(\kappa_1-\kappa_2)^2}{8}}=\frac{S}{2\sqrt{2}}.
\end{equation}

In short, the results above for both space- and time-like surfaces {give} that the expected value ${\langle\kappa_n\rangle}$ and the standard deviation ${\sqrt{\langle(\Delta\kappa_n)^2\rangle}}$ of $\kappa_n$ are associated with the mean $H$ and skew $S$ curvatures according to
\begin{equation}
H=\epsilon\,{\langle\kappa_n\rangle \mbox{ and }}S=2\sqrt{2}\,{\sqrt{\langle(\Delta\kappa_n)^2\rangle}}.\label{eq::StatInterprationSandH}
\end{equation}
\begin{remark}
{A similar procedure of considering the finite contributions of diverging integrals also appears in the formulation of the Cauchy integral formula for functions over the Lorentz numbers \cite{CatoniAACA2012}. Indeed, in general $f(w)\not=\frac{1}{2\pi \tau}\int_{\gamma}\frac{f(z)}{z-w}\mathrm{d}z$, where $\gamma(\theta)=w+R\mathrm{e}^{\mathrm{i}\theta}$ is an Euclidean circle around $z=w$. However, considering only the finite contribution of the same integral over the branch of a hyperbola we obtain  $f(w)=\lim_{a\to\infty}\frac{1}{2a \tau}\int_{\gamma}\frac{f(z)}{z-w}\mathrm{d}z$.}
\end{remark}

\section{Prescribed mean curvature equation in Lorentz-Minkowski space}
\label{sec::PrescMC}

In this section we solve the problem of prescribed mean curvature. Following Kenmotsu \cite{KemmotsuTMJ1981}, the strategy for {surfaces of revolution} with a non-lightlike axis (subsections 2.1, 2.2, and 2.3) consists in considering the generating curve parameterized by arc-length and then write the equation for the mean curvature, which is initially a nonlinear second order ODE, as a linear first order ODE with coefficients in a certain ring of hypercomplex numbers along the generating curves (subsection 2.4): complex number $\mathbb{C}$ for curves on a spacelike plane and Lorentz numbers $\mathbb{L}$ (see Appendix A) for curves on a timelike plane. For a lightlike axis we are still able to solve the prescribed $H$ problem using the real numbers $\mathbb{R}$ (subsection 2.5).

\begin{remark}
{The surfaces described in section \ref{sec::MCforCurvOnXYplaneWithOXaxis}, i.e., the ones generated from curves on a spacelike plane rotated around a spacelike axis (Figure 1(c)), furnish a counter-example} to the assertion that a surface of revolution in $\mathbb{E}_1^3$ inherits the causal character of its generating curve \cite{IshiharaJMTU1988} (there Ishihara and Hara only take into account the revolution of curves on the timelike $yz$-plane).
\end{remark}

\begin{figure*}[tbp]
\centering
    {\includegraphics[width=0.24\linewidth]{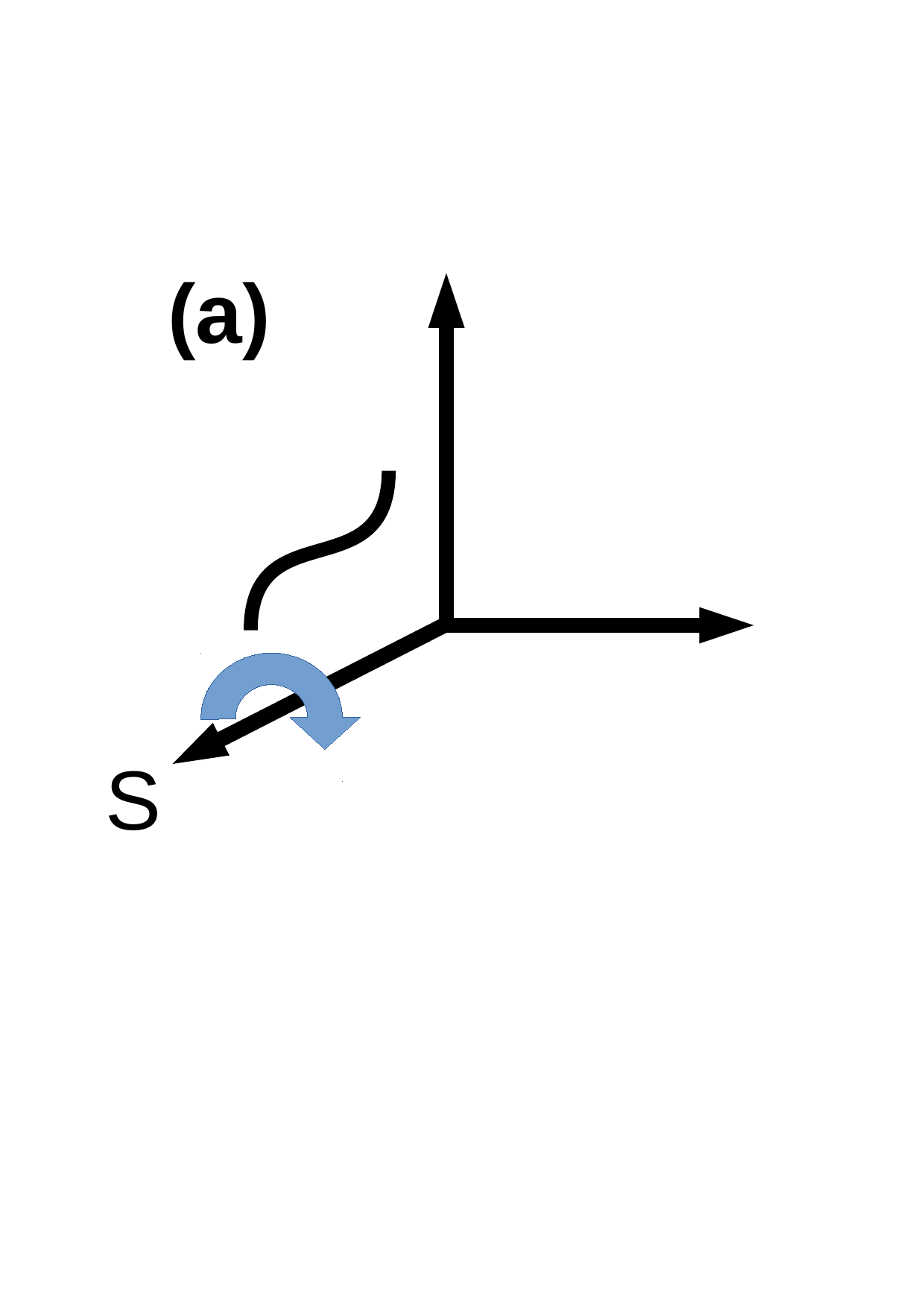}}
    {\includegraphics[width=0.24\linewidth]{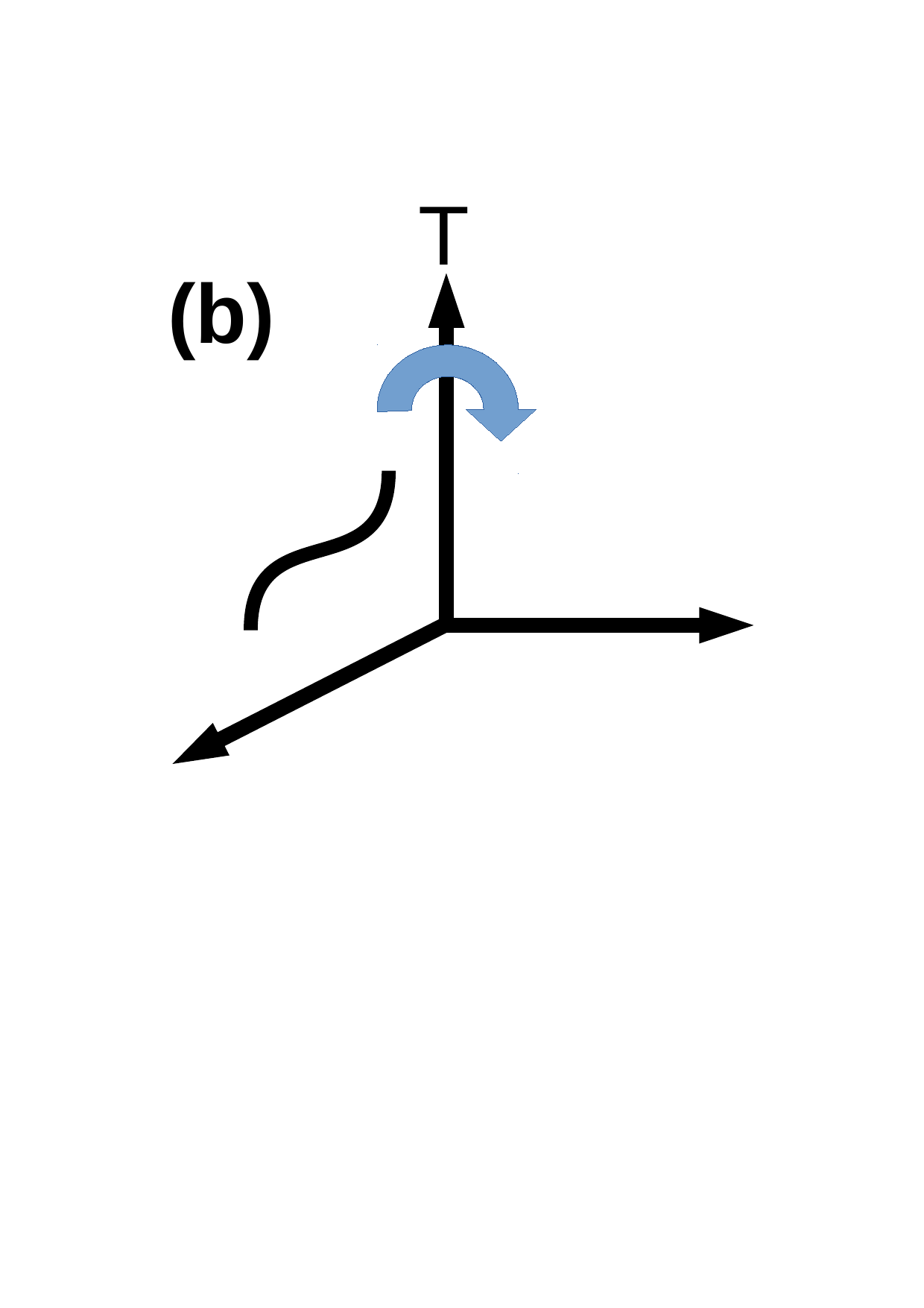}}
    {\includegraphics[width=0.24\linewidth]{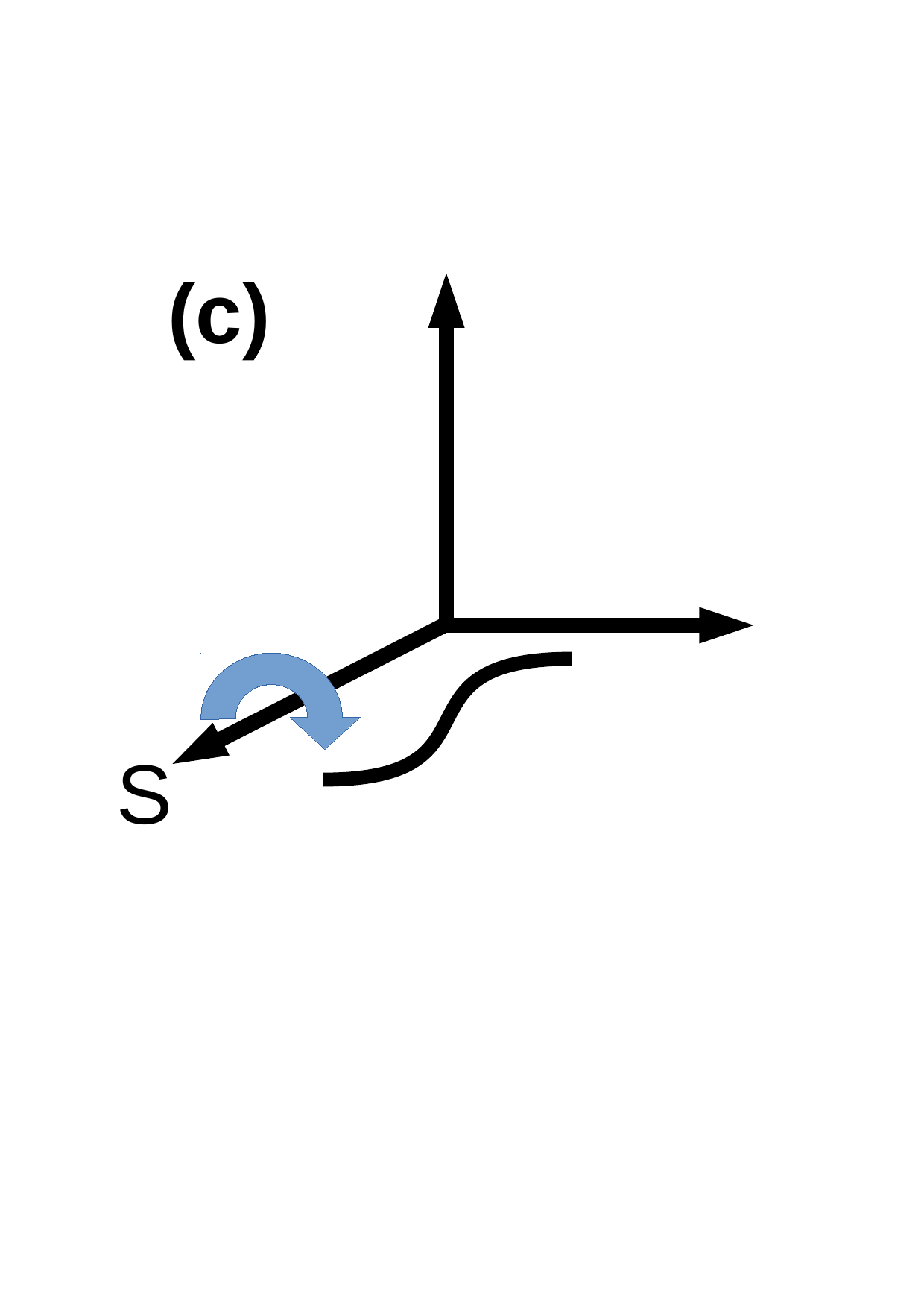}}
    {\includegraphics[width=0.24\linewidth]{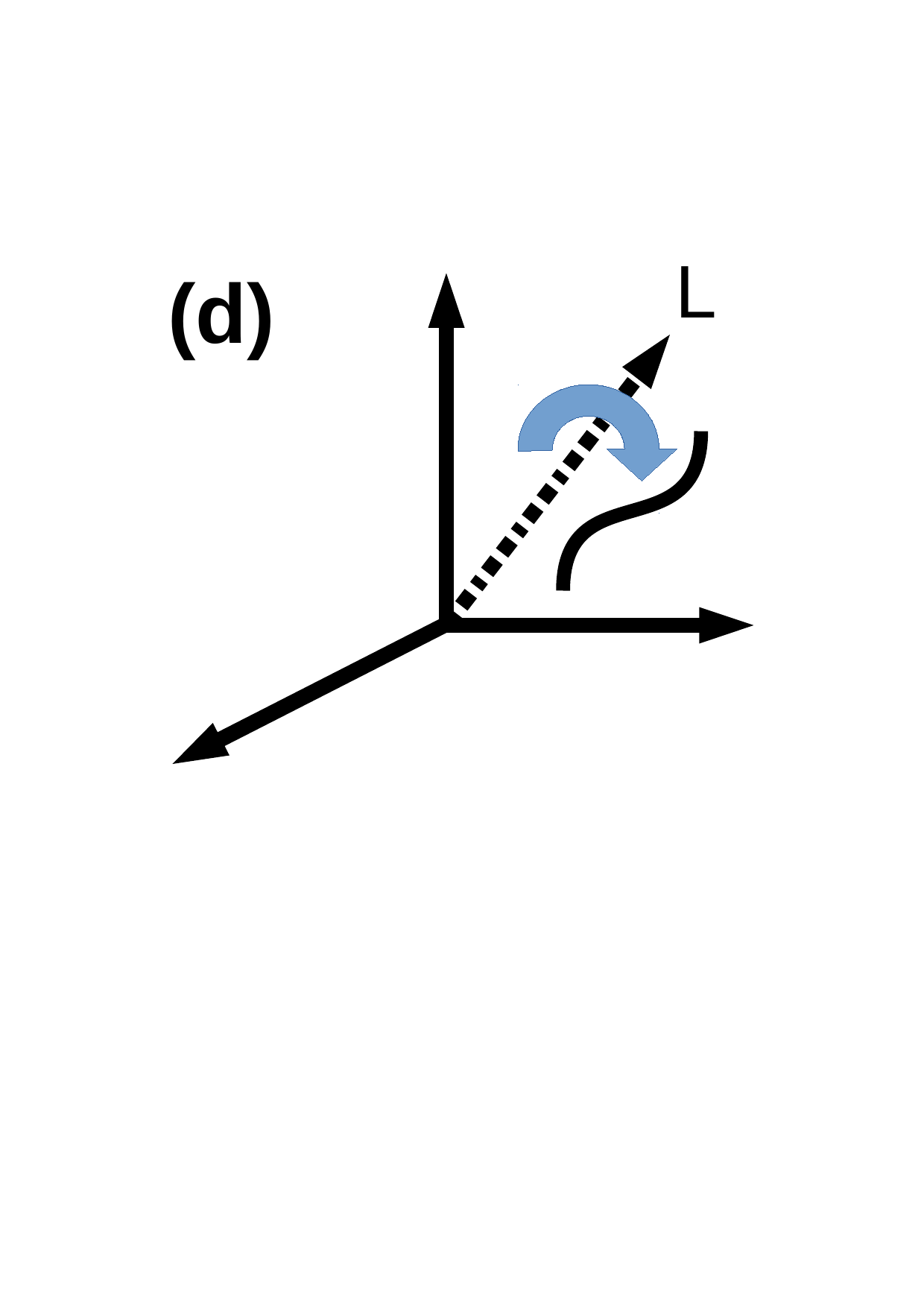}}

\caption{ { Surfaces of revolution} in $\mathbb{E}_1^3$: (a) for curves on a timelike plane with a spacelike axis {S} there are two types: a space- or time-like surface for a space- or time-like curve, respectively; (b) for  curves on a timelike plane with a timelike axis {T} there are two types: a space- or time-like surface for a space- or time-like curve, respectively; (c) for  curves on a spacelike plane and, consequently, with a spacelike axis {S} there is one type: a timelike surface; and (d) for  curves on a timelike plane with a lightlike axis {L} there are two types: a space- or time-like surface for a space- or time-like curve, respectively.}
          \label{fig::DiagramSphPlaneCurv}
\end{figure*}

\subsection{Rotation of a curve on a timelike plane around a timelike axis}

Let $\alpha:I\to \mathbb{E}_1^3$ be a $C^2$ regular curve in the $xz$-plane, i.e., $\alpha(s)=(x(s),0,z(s))$ with $s$ arc-length parameter and $x>0$. Considering a rotation of this curve around the $z$-axis gives the following surface of revolution
\begin{equation}
Z(s,\theta) = (x(s)\cos\theta,x(s)\sin\theta,z(s)),\label{defRevSurfTimeAxisXZcurve}
\end{equation}
where $\theta\in(0,2\pi)$. Since $s$ is the arc-length parameter of $\alpha$, we can write
\begin{equation}
\eta = \langle\alpha',\alpha'\rangle = x'\,^2-z'\,^2\in\{-1,1\}.
\end{equation}

The first fundamental form $\mathrm{I}$ is given by
\begin{equation}
\mathrm{I} = \eta\, {\rm d}s^2+x^2\,{\rm d}\theta^2\,.\label{eq1stFFormRevSurfTimeAxisXZcurve}
\end{equation}
Since $g_{11}g_{22}-g_{12}^2=\eta\,x^2\Rightarrow \epsilon=-\eta$, it follows that $Z$ is a spacelike (timelike) surface if and only if $\alpha$ is a spacelike (timelike) curve.

Writing the normal vector to $Z$ as
\begin{equation}
N = (-z'\,\cos\theta,-z'\,\sin\theta,-x'),
\end{equation}
the second fundamental form $\mathrm{II}$ is given by
\begin{equation}
\mathrm{II} = (x'z''-x''z')\,{\rm d}s^2+xz'\,{\rm d}\theta^2\,.\label{eq2ndFFormRevSurfTimeAxisXZcurve}
\end{equation}
Since both I and II are diagonal, the shape operator $A=\mathrm{I}^{-1}\mathrm{II}$ is diagonalizable.

The mean curvature equation is then written as
\begin{equation}
2xH+xx'z''-xx''z'+\eta z'=0\,.\label{eqMeanCurvRevSurfTimeAxisXZcurve}
\end{equation}
Since $\alpha$ is parametrized by arc-length, we have the additional equation
\begin{equation}
x'\,^2-z'\,^2 = \eta 
{ \,\Rightarrow\, }
x'x'' = z'z''.\label{eqAddRelationsRevSurfTimeAxisXZcurve}
\end{equation}

Multiplying Eq. (\ref{eqMeanCurvRevSurfTimeAxisXZcurve}) by $x'$ gives
\begin{equation}
2xx'H+\eta(xz')'=0.
\end{equation}
On the other hand, multiplying Eq. (\ref{eqMeanCurvRevSurfTimeAxisXZcurve}) by $z'$ gives
\begin{equation}
2xz'H+\eta\,(xx')'-1=0.
\end{equation}
By defining $A(s)=x(s)x'(s)+\tau\,x(s)z'(s)$ in $\mathbb{L}$, we can write the two equations above in a single expression as
\begin{equation}
A'(s)+2\,\tau\,\eta\, H(s) \,A(s)-\eta = 0\,.\label{eqMeanCurvWithHyperbolicNumRevSurfTimeAxisXZcurve}
\end{equation}

\subsection{Rotation of a curve on a timelike plane around a spacelike axis}

Let $\beta:I\to \mathbb{E}_1^3$ be a $C^2$ regular curve in the $xz$-plane, i.e., $\alpha(s)=(x(s),0,z(s))$ with $s$ arc-length parameter and $z>0$. Considering a rotation of this curve around the $x$-axis gives the following surface of revolution
\begin{equation}
X_I(s,\theta) = (x(s),z(s)\sinh\theta,z(s)\cosh\theta),\label{defRevSurfSpaceAxisXZcurve}
\end{equation}
where $\theta\in(-\infty,+\infty)$. Since $s$ is the arc-length parameter of $\beta$, we can write
\begin{equation}
\eta = \langle\alpha',\alpha'\rangle = x'\,^2-z'\,^2\in\{-1,1\}.
\end{equation}

The first fundamental form I is given by
\begin{equation}
\mathrm{I} = \eta\, {\rm d}s^2+z^2\,{\rm d}\theta^2\,.\label{eq1stFFormRevSurfSpaceAxisXZcurve}
\end{equation}
Since $g=g_{11}g_{22}-g_{12}^2=\eta\,z^2\Rightarrow\epsilon=-\eta$, it follows that $X_I$ is a spacelike (timelike) surface if and only if $\beta$ is a spacelike (timelike) curve.

Writing the normal vector to $X_I$ as
\begin{equation}
N = (-z',-x'\sinh\theta,-x'\,\cosh\theta),
\end{equation}
the second fundamental form II is given by
\begin{equation}
\mathrm{II} = (x'z''-x''z')\,{\rm d}s^2+zx'\,{\rm d}\theta^2\,.\label{eq2ndFFormRevSurfSpaceAxisXZcurve}
\end{equation}
Since both I and II are diagonal, the shape operator $A=\mathrm{I}^{-1}\mathrm{II}$ is diagonalizable.

The mean curvature equation is then written as
\begin{equation}
2zH+zx'z''-zx''z'+\eta\, x'=0\,.\label{eqMeanCurvRevSurfSpaceAxisXZcurve}
\end{equation}
Since $\beta$ is parameterized by arc-length, we have the additional equation
\begin{equation}
x'\,^2-z'\,^2 = \eta
{ \,\Rightarrow\, }
x'x'' = z'z''.\label{eqAddRelationsRevSurfSpaceAxisXZcurve}
\end{equation}

Multiplying Eq. (\ref{eqMeanCurvRevSurfSpaceAxisXZcurve}) by $x'$ gives
\begin{equation}
2zx'H+\eta(zz')'+1=0.
\end{equation}
On the other hand, multiplying Eq. (\ref{eqMeanCurvRevSurfSpaceAxisXZcurve}) by $z'$ gives
\begin{equation}
2zz'H+\eta(zx')'=0.
\end{equation}
By defining $B(s)=z(s)z'(s)+\tau\,z(s)x'(s)$ in $\mathbb{L}$, we can write the two equations above in a single expression as
\begin{equation}
B'(s)+2\,\tau\,\eta \,H(s) \,B(s)+\eta = 0\,.\label{eqMeanCurvWithHyperbolicNumRevSurfSpaceAxisXZcurve}
\end{equation}

\subsection{Rotation of a curve on a spacelike plane around a spacelike axis}
\label{sec::MCforCurvOnXYplaneWithOXaxis}

Let $\gamma:I\to \mathbb{E}_1^3$ be a $C^2$ regular curve in the $xy$-plane, i.e., $\gamma(s)=(x(s),y(s),0)$ with $s$ arc-length and $y>0$. Considering a rotation of this curve around the $x$-axis gives the following surface of revolution
\begin{equation}
X_{II}(s,\theta) = (x(s),y(s)\cosh\theta,y(s)\sinh\theta),\label{defRevSurfSpaceAxisXYcurve}
\end{equation}
where $\theta\in(-\infty,+\infty)$. Since $s$ is the arc-length parameter of $\gamma$, the first fundamental form I is given by
\begin{equation}
\mathrm{I} = {\rm d}s^2-y^2\,{\rm d}\theta^2\,.\label{eq1stFFormRevSurfSpaceAxisXYcurve}
\end{equation}
Since $g=g_{11}g_{22}-g_{12}^2=-y^2\Rightarrow\epsilon=+1$, it follows that $X_{II}$ is a timelike surface (observe that $\gamma$ is necessarily a spacelike curve).

Writing the normal vector to $X_{II}$ as
\begin{equation}
N = (y',-x'\cosh\theta,-x'\,\sinh\theta),
\end{equation}
the second fundamental form II is given by
\begin{equation}
\mathrm{II} = (x''y'-x'y'')\,{\rm d}s^2-x'y\,{\rm d}\theta^2\,.\label{eq2ndFFormRevSurfSpaceAxisXYcurve}
\end{equation}
Since both I and II are diagonal, the shape operator $A=\mathrm{I}^{-1}\mathrm{II}$ is diagonalizable.

The mean curvature equation is then written as
\begin{equation}
2yH-yy'x''+yx'y''- x'=0\,.\label{eqMeanCurvRevSurfSpaceAxisXYcurve}
\end{equation}
Since $\beta$ is parameterized by arc-length, we have the additional equation
\begin{equation}
x'\,^2+y'\,^2 = 1
{ \,\Rightarrow\, }
x'x'' = -y'y''.\label{eqAddRelationsRevSurfSpaceAxisXYcurve}
\end{equation}

Multiplying Eq. (\ref{eqMeanCurvRevSurfSpaceAxisXYcurve}) by $x'$ gives
\begin{equation}
2yx'H+(yy')'-1=0.
\end{equation}
On the other hand, multiplying Eq. (\ref{eqMeanCurvRevSurfSpaceAxisXYcurve}) by $y'$ gives
\begin{equation}
2yy'H-(yx')'=0.
\end{equation}
By defining $C(s)=y(s)y'(s)+{\rm i}\,y(s)x'(s)$ in $\mathbb{C}$ we can write the two equations above in a single expression as
\begin{equation}
C'(s)-2\,{\rm i}\,H(s) \,C(s)-1= 0\,.\label{eqMeanCurvWithComplexNumRevSurfSpaceAxisXYcurve}
\end{equation}

\subsection{Solution of the mean curvature equation for surfaces of revolution with a non-lightlike axis}

In this subsection we shall prove three theorems (Theorems \ref{thr::PrescMCRevSurfTaxisXZcurv}, \ref{thr::PrescMCRevSurfSaxisXZcurv}, and \ref{thr::PrescMCRevSurfSaxisXYcurv}) stating that,  given a continuous function $H:I\to\mathbb{R}$, there exists a 3-parameter family of $C^2$ curves whose corresponding {surface of revolution} has $C^0$ mean curvature $H$ when rotated around a non-lightlike axis as described in Figures 1(a), 1(b), and 1(c). Here, we also comment on the characterization of constant mean curvature surfaces of revolution as Delaunay surfaces (Theorem \ref{thr::DelaunayThr}).

For {surfaces of revolution} with a non-lightlike axis the mean curvature equation strongly depends on the causal character of the plane, $\Pi$, that contains the generating curve. Indeed, from Eqs. (\ref{eqMeanCurvWithHyperbolicNumRevSurfTimeAxisXZcurve}), (\ref{eqMeanCurvWithHyperbolicNumRevSurfSpaceAxisXZcurve}), and (\ref{eqMeanCurvWithComplexNumRevSurfSpaceAxisXYcurve}), the mean curvature equations relate to either
\begin{equation}
A'(s)+2\,\tau\,\eta\,H(s)\,A(s)-\eta=0
\mbox{ and }B'(s)+2\,\tau\,\eta\,H(s)\,B(s)+\eta=0
\end{equation}
if $\Pi$ is timelike or to
\begin{equation}
C'(s)-2\,\mathrm{i}\,H(s)\,C(s)-1=0 
\end{equation}
if $\Pi$ is spacelike. These equations can be  solved exactly:
\begin{enumerate}[(a)]
\item for a  curve on a timelike plane rotated around a timelike axis the solution is
\begin{equation}
A(s)=\left[\int_{0}^s\eta\,\mathrm{e}^{2\tau\eta\int_{0}^tH(u)\mathrm{d}u}\,\mathrm{d}t\right]\mathrm{e}^{-2\tau\eta\int_{0}^sH(t)\mathrm{d}t}+A_0\,\mathrm{e}^{-2\tau\eta\int_{0}^sH(t)\mathrm{d}t}\,,\label{EqSolutionMCeqForA}
\end{equation}
where $A_0$ is a constant; 
\item for a curve on a timelike plane rotated around a spacelike axis the solution is
\begin{equation}
B(s)=-\left[\int_{0}^s\eta\,\mathrm{e}^{2\tau\eta\int_{0}^tH(u)\mathrm{d}u}\,\mathrm{d}t\right]\mathrm{e}^{-2\tau\eta\int_{0}^sH(t)\mathrm{d}t}+B_0\,\mathrm{e}^{-2\tau\eta\int_{0}^sH(t)\mathrm{d}t}\,,\label{EqSolutionMCeqForB}
\end{equation}
where $B_0$ is a constant; and
\item for a curve on a spacelike plane (rotated around a spacelike axis) the solution is
\begin{equation}
C(s)=\left[\int_{0}^s\mathrm{e}^{-2\mathrm{i}\int_{0}^tH(u)\mathrm{d}u}\,\mathrm{d}t\right]\mathrm{e}^{2\mathrm{i}\int_{0}^sH(t)\mathrm{d}t}+C_0\,\mathrm{e}^{2\mathrm{i}\int_{0}^sH(t)\mathrm{d}t}\,,\label{EqSolutionMCeqForC}
\end{equation}
where $C_0$ is a constant.
\end{enumerate}

From the solutions above we can find a generating curve $(x(s),0,z(s))$ or $(x(s),y(s),0)$ leading to a surface with prescribed mean curvature $H$. Indeed, the Lorentzian variable $A$ in Eq. (\ref{eqMeanCurvWithHyperbolicNumRevSurfTimeAxisXZcurve}) satisfies
\begin{equation}
\left\{
\begin{array}{c}
A\bar{A} = \eta x^2\\
A-\bar{A} = 2\tau xz'\\
\end{array}
\right.\Rightarrow z' = \displaystyle\frac{A-\bar{A}}{2\tau\sqrt{\eta A\,\overline{A}}}\,.\label{eq::ExpressingzprimeUsingA}
\end{equation}
On the other hand, the Lorentzian variable $B$ in Eq. (\ref{eqMeanCurvWithHyperbolicNumRevSurfSpaceAxisXZcurve}) satisfies
\begin{equation}
\left\{
\begin{array}{c}
B\bar{B} = -\eta z^2\\
B-\bar{B} = 2\tau zx'\\
\end{array}
\right.\Rightarrow x' = \frac{B-\bar{B}}{2\tau\sqrt{-\eta B\,\overline{B}}}\,.
\end{equation}
Finally, the complex variable $C$ in Eq. (\ref{eqMeanCurvWithComplexNumRevSurfSpaceAxisXYcurve}) satisfies
\begin{equation}
\left\{
\begin{array}{c}
C\bar{C} = y^2\\
C-\bar{C} = 2\mathrm{i}yx'\\
\end{array}
\right.\Rightarrow x' = \frac{C-\bar{C}}{2\mathrm{i}\sqrt{C\,\overline{C}}}\,.\label{eq::ExpressingxprimeUsingC}
\end{equation}

\begin{theorem}
\label{thr::PrescMCRevSurfTaxisXZcurv}
Let $\alpha(s)=(x(s),0,z(s))$ be the generating curve of a $C^2$ {surface of revolution} with timelike axis $Oz$ and $C^0$ mean curvature $H(s)$. Then, we write $\alpha(s)$ as
\begin{equation}
\alpha(s;H,\mathbf{a})=(\sqrt{\eta[(g_1+\eta a_1)^2-(f_1+\eta a_2)^2]},0,\int_0^s\eta\frac{g_1'(f_1+\eta a_2)-f_1'(g_1+\eta a_1)}{\sqrt{\eta[(g_1+\eta a_1)^2-(f_1+\eta a_2)^2]}}\mathrm{d}t+a_3),\label{eq::CurveAlphaPrescH}
\end{equation}
where we have introduced the functions
\begin{equation}
\left\{
\begin{array}{c}
f_1(s)=\int_0^s\sinh(2\eta\int_0^tH(u)\mathrm{d}u)\mathrm{d}t\\[5pt]
g_1(s)=\int_0^s\cosh(2\eta\int_0^tH(u)\mathrm{d}u)\mathrm{d}t\\
\end{array}
\right.\label{eqAuxiliaryFunctionsMCeqUsingA}
\end{equation}
and the constant vector $\mathbf{a}=(a_1,a_2,a_3)$ satisfies the initial conditions at $s=0$ given by $\alpha(0)=(\sqrt{\eta(a_1^2-a_2^2)},0,a_3)$ and $\alpha'(0)=[\eta(a_1^2-a_2^2)]^{-1/2}(a_1,0,a_2)$.

Conversely, given a continuous function $H(s)$, $s\in I$, and a constant vector $(a_1,a_2,a_3)\in S_1\times\mathbb{R}$, where $S_1=\cup_{s\in I}\{(X,Y):[X+\eta g_1(s)]^2-[Y+\eta f_1(s)]^2\not=0\}$, then the $C^2$ curve $\alpha(s;H(s),\mathbf{a})$ generates a $C^2$ surface of revolution with $C^0$ mean curvature $H(s)$ when rotated around the (timelike) $z$-axis.
\end{theorem}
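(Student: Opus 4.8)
The plan is to use the correspondence, set up in Subsection 2.1, between the generating curve $\alpha=(x,0,z)$ and the Lorentz-number valued function $A(s)=x(s)x'(s)+\ell\,x(s)z'(s)\in\mathbb{L}$. Recall that the nonlinear mean curvature equation (\ref{eqMeanCurvRevSurfTimeAxisXZcurve}), coupled with the arc-length constraints (\ref{eqAddRelationsRevSurfTimeAxisXZcurve}), is equivalent to the linear first order ODE (\ref{eqMeanCurvWithHyperbolicNumRevSurfTimeAxisXZcurve}), whose general solution is (\ref{EqSolutionMCeqForA}), and that $x$ and $z'$ are recovered from $A$ through (\ref{eq::ExpressingzprimeUsingA}). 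Hence the direct statement will follow by unpacking (\ref{EqSolutionMCeqForA}) into its real and $\ell$-components, while the converse will follow by checking that the curve (\ref{eq::CurveAlphaPrescH}) is $C^2$, is unit-speed with $\langle\alpha',\alpha'\rangle=\eta$, and generates a surface whose mean curvature is the prescribed $H$; the last point will be obtained by reversing the derivation of (\ref{eqMeanCurvWithHyperbolicNumRevSurfTimeAxisXZcurve}) from (\ref{eqMeanCurvRevSurfTimeAxisXZcurve}).

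For the direct statement I would set $\psi(s)=\int_0^sH(u)\,\mathrm{d}u$, evaluate (\ref{EqSolutionMCeqForA}) at $s=0$ to identify the integration constant $A_0=A(0)=x(0)x'(0)+\ell\,x(0)z'(0)=:a_1+\ell\,a_2$, and rewrite $\int_0^s\eta\,\mathrm{e}^{2\ell\eta\psi(t)}\,\mathrm{d}t=\eta\,(g_1+\ell\,f_1)$ using $\mathrm{e}^{\ell\phi}=\cosh\phi+\ell\sinh\phi$ and the definitions (\ref{eqAuxiliaryFunctionsMCeqUsingA}) of $f_1,g_1$. With $\eta^2=1$ this collapses (\ref{EqSolutionMCeqForA}) to $A=\eta[(g_1+\eta a_1)+\ell(f_1+\eta a_2)]\,\mathrm{e}^{-2\ell\eta\psi}$. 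Then $\mathrm{e}^{\ell\phi}\,\overline{\mathrm{e}^{\ell\phi}}=1$ together with the first line of (\ref{eq::ExpressingzprimeUsingA}) give $x^2=\eta A\bar A=\eta[(g_1+\eta a_1)^2-(f_1+\eta a_2)^2]$, while reading off the $\ell$-part of $A$ and using $g_1'=\cosh(2\eta\psi)$, $f_1'=\sinh(2\eta\psi)$ in the second line of (\ref{eq::ExpressingzprimeUsingA}) gives $z'=\eta[g_1'(f_1+\eta a_2)-f_1'(g_1+\eta a_1)]/x$; integrating with $z(0)=a_3$ produces (\ref{eq::CurveAlphaPrescH}). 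Evaluating $x,x',z'$ at $s=0$, where $f_1=g_1=f_1'=0$ and $g_1'=1$, recovers the stated initial conditions, and these in turn determine $\mathbf{a}$ uniquely from $\alpha(0)$ and $\alpha'(0)$.

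For the converse the work is mostly differentiation and bookkeeping. Given $H\in C^0(I)$ and $(a_1,a_2)\in S_1$, I would put $p=\eta[(g_1+\eta a_1)g_1'-(f_1+\eta a_2)f_1']$, $q=\eta[(f_1+\eta a_2)g_1'-(g_1+\eta a_1)f_1']$, and $A=p+\ell q$. The identity $(g_1')^2-(f_1')^2=\cosh^2(2\eta\psi)-\sinh^2(2\eta\psi)=1$ gives $p^2-q^2=(g_1+\eta a_1)^2-(f_1+\eta a_2)^2$, which is nonzero by the definition of $S_1$ (and, once the sign is fixed, positive on the portion of $I$ under consideration), so $x=\sqrt{\eta(p^2-q^2)}>0$ is well defined; since $H\in C^0$ forces $\psi\in C^1$, hence $f_1,g_1\in C^2$ and $p,q\in C^1$, both $x$ and $z$ (defined by integrating $z'=q/x$) are $C^2$, so the surface $Z(s,\theta)$ of (\ref{defRevSurfTimeAxisXZcurve}) is $C^2$. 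Differentiating with $\psi'=H$ gives $p'=\eta(1-2Hq)$ and $q'=-2\eta Hp$, hence $A'=\eta-2\ell\eta HA$, i.e., $A$ solves (\ref{eqMeanCurvWithHyperbolicNumRevSurfTimeAxisXZcurve}); the same formulas give $\eta(pp'-qq')=p$, so $xx'=\frac12(x^2)'=p$ and therefore $x'=p/x$, which together with $z'=q/x$ makes $A=xx'+\ell xz'$ and yields $x'^2-z'^2=(p^2-q^2)/x^2=\eta$, confirming that $\alpha$ is regular and arc-length parametrized. Finally, splitting $A'+2\ell\eta HA-\eta=0$ into real and $\ell$ parts recovers $(xx')'+2\eta Hxz'-\eta=0$ and $(xz')'+2\eta Hxx'=0$; multiplying these by $z'$ and $x'$ respectively, subtracting, and using $x'^2-z'^2=\eta$ reproduces exactly the mean curvature equation (\ref{eqMeanCurvRevSurfTimeAxisXZcurve}), which is the defining relation for the mean curvature of $Z(s,\theta)$; since $x>0$, that mean curvature is the prescribed $H$.

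I do not expect a conceptual obstacle: the Lorentz-number algebra (the identities $\ell^2=1$, $\overline{\mathrm{e}^{\ell\phi}}=\mathrm{e}^{-\ell\phi}$, $\mathrm{e}^{\ell\phi}\,\overline{\mathrm{e}^{\ell\phi}}=1$) and the two differentiations are routine. The points requiring real care are: (i) keeping $\sqrt{\eta(p^2-q^2)}$ real and positive, which is precisely what forces the constant vector into $S_1\times\mathbb{R}$ and may require shrinking the interval $I$; and (ii) making sure that the passage between (\ref{eqMeanCurvRevSurfTimeAxisXZcurve}) and (\ref{eqMeanCurvWithHyperbolicNumRevSurfTimeAxisXZcurve}) is a genuine equivalence, each elementary step (multiplying by $x'$ or $z'$, combining, and the reverse) being invertible exactly because $x'^2-z'^2=\eta\neq0$, so that \emph{having prescribed mean curvature} $H$ is really equivalent to $A$ \emph{solving the linear ODE} with that $H$.
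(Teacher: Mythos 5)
Your proposal is correct and follows essentially the same route as the paper: identify $A_0=a_1+\ell a_2$, expand $\mathrm{e}^{2\ell\eta\int H}$ into $g_1+\ell f_1$ to get $A=\eta[(g_1+\eta a_1)+\ell(f_1+\eta a_2)](g_1'-\ell f_1')$, and recover $x$ and $z'$ from $A\bar A$ and $A-\bar A$ via Eq.~(\ref{eq::ExpressingzprimeUsingA}). Your converse is in fact more complete than the paper's (which merely asserts the verification): the explicit check that $p'=\eta-2\eta Hq$, $q'=-2\eta Hp$, $xx'=p$, $x'^2-z'^2=\eta$, and the reversal back to Eq.~(\ref{eqMeanCurvRevSurfTimeAxisXZcurve}) are all accurate and supply the details the paper leaves implicit.
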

\begin{proof}
Using the functions introduced in Eq. (\ref{eqAuxiliaryFunctionsMCeqUsingA}), we can rewrite Eq. (\ref{EqSolutionMCeqForA}) as
\begin{equation}
A = \eta[(f_1+\eta a_2)+\tau(g_1+\eta a_1)](-f_1'+\tau g_1'),
\end{equation}
where $\tau A_0=a_2+\tau a_1$. Consequently,
\begin{equation}
\left\{
\begin{array}{c}
A-\bar{A} = 2\eta\tau\,[g_1'(f_1+\eta a_2)-f_1'(g_1+ \eta a_1)]\\
A\bar{A}=-(f_1+\eta a_2)^2+(g_1+\eta a_1)^2
\end{array}
\right.\,.
\end{equation}
Finally, inserting the relations above in Eq. (\ref{eq::ExpressingzprimeUsingA}) gives the expressions for $x(s)$ and $z(s)$ (after integration of $z'$) resulting in the expression for $\alpha(s;H(s),\mathbf{a})$ in Eq. (\ref{eq::CurveAlphaPrescH}). Geometrically, the constants $a_1,a_2,a_3$ are related to the initial conditions, i.e., position and initial velocity of the generating curve at $s=0$: $\alpha(0)=(\sqrt{\eta(a_1^2-a_2^2)},0,a_3)$ and $\alpha'(0)=[\eta(a_1^2-a_2^2)]^{-1/2}(a_1,0,a_2)$.

Conversely, given a continuous function $H:I\to\mathbb{R}$ and $(a_1,a_2,a_3)\in S_1\times \mathbb{R}$, notice that $S_1$ is a non-empty open subset of $\mathbb{R}^2$, the curve $\alpha(s;H(s),\mathbf{a})$ is a unit speed curve of class $C^2$ that generates a $C^2$ surface of revolution around the (timelike) $z$-axis with continuous mean curvature $H$.
\end{proof}

\begin{theorem}
\label{thr::PrescMCRevSurfSaxisXZcurv}
Let $\beta(s)=(x(s),0,z(s))$ be the generating curve of a $C^2$ surface of revolution with spacelike axis $Ox$ and $C^0$ mean curvature $H(s)$. Then, we write $\beta(s)$ as
\begin{equation}
\beta(s;H,\mathbf{b})=(\int_0^s-\eta\frac{g_2'(f_2-\eta b_1)-f_2'(g_2-\eta b_2)}{\sqrt{\eta[(f_2-\eta b_1)^2-(g_2-\eta b_2)^2]}}\mathrm{d}t+b_3,0,\sqrt{\eta[(f_2-\eta b_1)^2-(g_2-\eta b_2)^2]}),
\end{equation}
where we have introduced the functions
\begin{equation}
\left\{
\begin{array}{c}
f_2(s)=\int_0^s\sinh(2\eta\int_0^tH(u)\mathrm{d}u)\mathrm{d}t\\[5pt]
g_2(s)=\int_0^s\cosh(2\eta\int_0^tH(u)\mathrm{d}u)\mathrm{d}t\\
\end{array}
\right.\label{eqAuxiliaryFunctionsMCeqUsingB}
\end{equation}
and the constant vector $\mathbf{b}=(b_1,b_2,b_3)$ satisfies the initial conditions at $s=0$ given by $\beta(0)=(b_3,0,\sqrt{\eta(b_1^2-b_2^2)})$ and $\beta'(0)=[\eta(b_1^2-b_2^2)]^{-1/2}(b_1,0,b_2)$.

Conversely, given a continuous function $H(s)$, $s\in I$, and a constant vector $(b_1,b_2,b_3)\in S_2\times\mathbb{R}$, where $S_2=\cup_{s\in I}\{(X,Y):[X-\eta f_2(s)]^2-[Y-\eta g_2(s)]^2\not=0\}$, then the $C^2$ curve $\beta(s;H(s),\mathbf{b})$ generates a $C^2$ surface of revolution with $C^0$ mean curvature $H(s)$ when rotated around the (spacelike) $x$-axis.
\end{theorem}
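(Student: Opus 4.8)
The plan is to run the argument of Theorem~\ref{thr::PrescMCRevSurfTaxisXZcurv} essentially verbatim. Since $\beta$ is a unit-speed curve on the timelike $xz$-plane whose revolution about $Ox$ has mean curvature $H$, the associated Lorentz-valued function $B(s)=z(s)z'(s)+\ell\,z(s)x'(s)$ solves the linear equation (\ref{eqMeanCurvWithHyperbolicNumRevSurfSpaceAxisXZcurve}) and is therefore given by (\ref{EqSolutionMCeqForB}). First I would expand the Lorentz exponentials in (\ref{EqSolutionMCeqForB}) via $\mathrm{e}^{\ell\psi}=\cosh\psi+\ell\sinh\psi$ (Appendix~A): the bracketed integral becomes $\eta\,[g_2(s)+\ell\,f_2(s)]$, with $f_2,g_2$ as in (\ref{eqAuxiliaryFunctionsMCeqUsingB}), and the integrating factor $\mathrm{e}^{-2\ell\eta\int_0^s H}$ becomes $g_2'(s)-\ell\,f_2'(s)$. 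Writing the integration constant as $B_0=b_2+\ell\,b_1$ and using $\eta^2=1$, this puts the solution in the compact form $B(s)=-\eta\,[(g_2-\eta b_2)+\ell\,(f_2-\eta b_1)]\,(g_2'-\ell\,f_2')$.

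Next I would extract the two real functions that rebuild the generating curve. Using $\overline{a+\ell b}=a-\ell b$, $(a+\ell b)(a-\ell b)=a^2-b^2$, and $(g_2')^2-(f_2')^2=1$ (since $g_2'=\cosh$ and $f_2'=\sinh$ of the same argument), a one-line computation gives $B\bar B=(g_2-\eta b_2)^2-(f_2-\eta b_1)^2$ and $B-\bar B=-2\eta\ell\,[(f_2-\eta b_1)g_2'-(g_2-\eta b_2)f_2']$. Feeding these into the relations $B\bar B=-\eta z^2$ and $x'=(B-\bar B)/(2\ell\sqrt{-\eta B\bar B})$ satisfied by $B$ reproduces precisely the stated $z(s)=\sqrt{\eta[(f_2-\eta b_1)^2-(g_2-\eta b_2)^2]}$ and the claimed integrand for $x(s)$; integrating $x'$ with $x(0)=b_3$ yields $\beta(s;H,\mathbf{b})$. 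Evaluating at $s=0$ (where $f_2=g_2=0$, $g_2'=1$, $f_2'=0$) gives $\beta(0)=(b_3,0,\sqrt{\eta(b_1^2-b_2^2)})$, and differentiating $z^2$ once more at $s=0$ gives $\beta'(0)=[\eta(b_1^2-b_2^2)]^{-1/2}(b_1,0,b_2)$, so $\mathbf{b}$ encodes exactly the stated initial data.

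For the converse I would observe that $S_2$ is a non-empty open subset of $\mathbb{R}^2$, being the complement of the union over $s\in I$ of the degenerate loci $[X-\eta f_2(s)]^2=[Y-\eta g_2(s)]^2$. Hence for $(b_1,b_2)$ in the part of $S_2$ on which $\eta[(f_2-\eta b_1)^2-(g_2-\eta b_2)^2]>0$ the formula above defines a curve of class $C^2$ on a non-degenerate subinterval, and every step reverses there: a direct check gives $x'^2-z'^2=\eta$, so $\beta$ is unit speed; the associated $B$ then satisfies (\ref{eqMeanCurvWithHyperbolicNumRevSurfSpaceAxisXZcurve}); and splitting that single Lorentz-valued equation into its $1$- and $\ell$-components recovers (\ref{eqMeanCurvRevSurfSpaceAxisXZcurve}), i.e.\ the revolution surface generated by $\beta$ has continuous mean curvature $H$. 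I do not expect a genuine obstacle here; the only care needed is tracking the sign factor $\eta$ through the $\mathbb{L}$-arithmetic and, as in the Remark after (\ref{eq::StatInterprationSandH}), restricting to the branch of $S_2$ that keeps the radicand positive.
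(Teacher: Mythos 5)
Your proposal is correct and follows exactly the route the paper takes: its own proof of this theorem simply declares the argument ``analogous to the previous one'' (Theorem \ref{thr::PrescMCRevSurfTaxisXZcurv}) with the integration constant fixed by $\ell B_0=b_1+\ell b_2$, which is precisely your $B_0=b_2+\ell b_1$. You have merely written out the Lorentz-number algebra, the recovery of $x'$ and $z$ from $B\bar B=-\eta z^2$ and $B-\bar B=2\ell zx'$, and the verification of the initial data that the paper leaves implicit, all of which check out.
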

\begin{proof}
The proof is analogous to the previous one, since the solution for the mean curvature equation in terms of $B$, Eq. (\ref{eqMeanCurvWithHyperbolicNumRevSurfSpaceAxisXZcurve}), is analogous to that of $A$, Eq. (\ref{eqMeanCurvWithHyperbolicNumRevSurfTimeAxisXZcurve}). Notice that here we should write the constant of integration $B_0$ in Eq. (\ref{EqSolutionMCeqForB}) as $\tau B_0=b_1+\tau\, b_2$. 
\end{proof}

\begin{theorem}
\label{thr::PrescMCRevSurfSaxisXYcurv}
Let $\gamma(s)=(x(s),y(s),0)$ be the generating curve of a $C^2$ surface of revolution with  spacelike axis $Ox$ and $C^0$ mean curvature $H(s)$. Then, we write $\gamma(s)$ as
\begin{equation}
\gamma(s;H(s),\mathbf{c})=(\int_0^s\frac{F'(G+c_2)-G'(F-c_1)}{\sqrt{(F-c_1)^2+(G+c_2)^2}}\mathrm{d}t+c_3\,,\,\sqrt{(F-c_1)^2+(G+c_2)^2},0),\label{eq::CurveGammaPrescH}
\end{equation}
where we have introduced the functions
\begin{equation}
\left\{
\begin{array}{c}
F(s)=\int_0^s\sin(2\int_0^tH(u)\mathrm{d}u)\mathrm{d}t\\[5pt]
G(s)=\int_0^s\cos(2\int_0^tH(u)\mathrm{d}u)\mathrm{d}t\\
\end{array}
\right.\label{eqAuxiliaryFunctionsMCeqUsingC}
\end{equation}
and the constant vector $\mathbf{c}=(c_1,c_2,c_3)$ satisfies the initial conditions at $s=0$ given by $\gamma(0)=(c_3,\sqrt{c_1^2+c_2^2},0)$ and $\gamma'(0)=(c_1^2+c_2^2)^{-1/2}(c_1,c_2,0)$.

Conversely, given a continuous function $H(s)$, $s\in I$, and constants $(c_1,c_2,c_3)\in T\times\mathbb{R}$, where $T=\cup_{s\in I}\{(X,Y):[X-F(s)]^2+[Y+G(s)]^2\not=0\}$, then the $C^2$ curve $\gamma(s;H(s),\mathbf{c})$ generates a $C^2$ surface of revolution with $C^0$ mean curvature $H(s)$ when rotated around the (spacelike) $x$-axis.
\end{theorem}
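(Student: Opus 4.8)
The plan is to follow the proofs of Theorems~\ref{thr::PrescMCRevSurfTaxisXZcurv} and~\ref{thr::PrescMCRevSurfSaxisXZcurv} almost verbatim, working in $\mathbb{C}$ instead of $\mathbb{L}$. First I would take the closed-form solution (\ref{EqSolutionMCeqForC}) of the complex linear ODE (\ref{eqMeanCurvWithComplexNumRevSurfSpaceAxisXYcurve}) and rewrite it by means of the functions (\ref{eqAuxiliaryFunctionsMCeqUsingC}): with $\Theta(s):=\int_0^sH(u)\,\mathrm{d}u$ one has $\int_0^s\mathrm{e}^{\mp2\mathrm{i}\Theta(t)}\,\mathrm{d}t=G(s)\mp\mathrm{i}\,F(s)$, together with $F'=\sin(2\Theta)$ and $G'=\cos(2\Theta)$. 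Writing the integration constant in (\ref{EqSolutionMCeqForC}) as $C_0=c_2+\mathrm{i}\,c_1$ then collapses the solution to the compact form
\begin{equation}
C(s)=\left[(G(s)+c_2)-\mathrm{i}\,(F(s)-c_1)\right]\mathrm{e}^{2\mathrm{i}\Theta(s)}.\label{eq::planCform}
\end{equation}

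Next I would extract the generating curve from (\ref{eq::planCform}) through the relations (\ref{eq::ExpressingxprimeUsingC}). Since $y>0$, taking the modulus gives $y(s)=\sqrt{(F-c_1)^2+(G+c_2)^2}$; and since $\frac{C-\bar{C}}{2\mathrm{i}\sqrt{C\bar{C}}}=\frac{\mathrm{Im}\,C}{y}$, expanding (\ref{eq::planCform}) yields $\mathrm{Im}\,C=F'\,(G+c_2)-G'\,(F-c_1)$, so $x'$ is exactly the integrand in (\ref{eq::CurveGammaPrescH}) and $x$ follows by quadrature with integration constant $c_3$; this reproduces (\ref{eq::CurveGammaPrescH}). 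Evaluating at $s=0$, where $F(0)=G(0)=\Theta(0)=0$, one gets $C(0)=c_2+\mathrm{i}\,c_1$, and reading $\mathrm{Re}\,C=yy'$ and $\mathrm{Im}\,C=yx'$ gives $\gamma(0)=(c_3,\sqrt{c_1^2+c_2^2},0)$ and $\gamma'(0)=(c_1^2+c_2^2)^{-1/2}(c_1,c_2,0)$, which pins down the geometric role of $\mathbf{c}$.

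For the converse I would define $x$ and $y$ directly by (\ref{eq::CurveGammaPrescH}) for a continuous $H$ and $(c_1,c_2,c_3)\in T\times\mathbb{R}$, and check three things. (i) Regularity: $H\in C^0\Rightarrow\Theta\in C^1\Rightarrow F',G'\in C^1\Rightarrow F,G\in C^2$, hence $y\in C^2$ wherever it does not vanish, which is precisely what the definition of $T$ secures (just as in Theorems~\ref{thr::PrescMCRevSurfTaxisXZcurv} and~\ref{thr::PrescMCRevSurfSaxisXZcurv}), and therefore $x\in C^2$. (ii) Unit speed: from $y'=\frac{(F-c_1)F'+(G+c_2)G'}{y}$ together with the formula for $x'$ one computes $(x')^2+(y')^2=\big((F')^2+(G')^2\big)\,\frac{(F-c_1)^2+(G+c_2)^2}{y^2}=1$, using $(F')^2+(G')^2=1$. (iii) Prescribed mean curvature: setting $C:=yy'+\mathrm{i}\,yx'$ one recovers (\ref{eq::planCform}), and differentiating it with $\Theta'=H$, $F'=\sin(2\Theta)$, $G'=\cos(2\Theta)$ gives $C'-2\mathrm{i}\,H\,C-1=0$; by the manipulations of Subsection~\ref{sec::MCforCurvOnXYplaneWithOXaxis}, which are reversible along a unit-speed curve, this is equivalent to (\ref{eqMeanCurvRevSurfSpaceAxisXYcurve}), i.e., to $X_{II}$ having mean curvature $H$.

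I do not expect a genuine obstacle: the substantive step---turning the nonlinear second-order mean curvature equation into the linear first-order complex ODE---was already carried out in Subsection~\ref{sec::MCforCurvOnXYplaneWithOXaxis}, so what remains is algebra strictly parallel to the two preceding theorems. The only points requiring care are bookkeeping: choosing the split $C_0=c_2+\mathrm{i}\,c_1$ so that the parameters $(c_1,c_2,c_3)$ acquire the asserted initial position/velocity meaning, and describing the admissible set $T$ (non-empty and open) so that the denominator $\sqrt{(F-c_1)^2+(G+c_2)^2}$ stays away from zero and $\gamma$ remains regular---both handled exactly as for Theorems~\ref{thr::PrescMCRevSurfTaxisXZcurv} and~\ref{thr::PrescMCRevSurfSaxisXZcurv}.
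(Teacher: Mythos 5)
Your proposal is correct and follows essentially the same route as the paper's own proof: both rewrite the solution (\ref{EqSolutionMCeqForC}) of the complex linear ODE using $F$ and $G$ with the constant split $C_0=c_2+\mathrm{i}c_1$ (your factored form $[(G+c_2)-\mathrm{i}(F-c_1)]\mathrm{e}^{2\mathrm{i}\Theta}$ coincides with the paper's $[(F-c_1)+\mathrm{i}(G+c_2)](F'-\mathrm{i}G')$ since $F'-\mathrm{i}G'=-\mathrm{i}\mathrm{e}^{2\mathrm{i}\Theta}$), then recover $y=\sqrt{C\bar C}$ and $x'=\mathrm{Im}\,C/y$ from (\ref{eq::ExpressingxprimeUsingC}) and read off the initial conditions at $s=0$. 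Your explicit verification of unit speed and of the reversibility of the manipulations in the converse direction is a detail the paper merely asserts, but it is the same argument.
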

\begin{proof}
Using the functions introduced in Eq. (\ref{eqAuxiliaryFunctionsMCeqUsingC}), we can rewrite Eq. (\ref{EqSolutionMCeqForC}) as
\begin{equation}
C = [(F-c_1)+\mathrm{i}(G+c_2)](F'-\mathrm{i}G'),
\end{equation}
where $\mathrm{i}C_0=-c_1+\mathrm{i}c_2$. Consequently,
\begin{equation}
\left\{
\begin{array}{c}
C-\bar{C} = 2\mathrm{i}[F'(G+c_2)-G'(F-c_1)]\\
C\bar{C}=(F-c_1)^2+(G+c_2)^2
\end{array}
\right.\,.
\end{equation}
Finally, inserting the relations above in Eq. (\ref{eq::ExpressingxprimeUsingC}) gives the expressions for $x(s)$ (after integration of $x'$) and $y(s)$ resulting {in} the expression for $\gamma(s;H(s),\mathbf{c})$ in Eq. (\ref{eq::CurveGammaPrescH}). Geometrically, the constants $c_1,c_2,c_3$ are related to the initial conditions, i.e., position and initial velocity of the generating curve at $s=0$: $\gamma(0)=(c_3,\sqrt{c_1^2+c_2^2},0)$ and $\gamma'(0)=(c_1^2+c_2^2)^{-1/2}(c_1,c_2,0)$.

Conversely, given a continuous function $H:I\to\mathbb{R}$ and $(c_1,c_2,c_3)\in T\times \mathbb{R}$, notice that $T$ is a non-empty open subset of $\mathbb{R}^2$, the curve $\gamma(s;H(s),\mathbf{c})$ is a unit speed curve of class $C^2$ that generates a $C^2$ surface of revolution around the (spacelike) $x$-axis with continuous mean curvature $H$.
\end{proof}

\begin{remark}
Notice that the curvature function of the curves $(f_i,g_i)$, $i\in\{1,2\}$, and  $(F,G)$ are precisely $\kappa=2H$. Indeed, applying the expressions for the curvature function of a curve $c(s)$ in a Lorentzian $(+,-)$ and {in an} Euclidean $(+,+)$ plane, i.e., $\kappa=-\eta_{c'}\Vert c''\Vert$  \cite{daSilvaJG2017} and $\kappa=\Vert c''\Vert$, respectively, gives the desired result (here $\eta_{c'}=\langle c',c'\rangle=\sinh^2(2\eta\int H)-\cosh^2(2\eta\int H)=-1$). A similar result is valid in $\mathbb{E}^3$ \cite{KemmotsuTMJ1981}.
\end{remark}

To finish this subsection, let us mention that in Euclidean space a theorem due to Delaunay asserts that surfaces of revolution with constant mean curvature are precisely the undulary, nodary, and catenary \cite{DelaunayJMPA1841}. These surfaces are obtained by rotating roulettes of ellipses, hyperbolas, and parabolas, respectively \cite{EellsMI1987}. Delaunay-type theorems were already established for surfaces of revolution with generating curves on a timelike plane \cite{HanoTMJ1984,IshiharaJMTU1988}. In the next theorem, we shall prove that the same is valid for the situation where the generating curve lies on a spacelike plane.

\begin{theorem}[Delaunay-type theorem]
Let $\gamma(s)=(x(s),y(s),0)$ be the generating curve of a surface of revolution $S_{\gamma}$ with  spacelike axis $Ox$. Then, the surface $S_{\gamma}$ has constant mean curvature $H$ if, and only if, $\gamma$ is the roulette of a conic in the $xy$-plane.
\label{thr::DelaunayThr}
\end{theorem}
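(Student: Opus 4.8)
The plan is to use that the $xy$-plane is a \emph{spacelike} plane of $\mathbb{E}_1^3$ and is therefore isometric to the Euclidean plane $\mathbb{E}^2$, so that $\gamma$ is an ordinary Euclidean plane curve; the statement then reduces to the classical theorem of Delaunay once the constant mean curvature condition for $S_\gamma$ is rephrased as the differential equation obeyed by the profile curves of the Euclidean Delaunay surfaces.

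The first step is to produce a first integral of the mean curvature equation. Recall from Subsection \ref{sec::MCforCurvOnXYplaneWithOXaxis} that multiplying Eq. (\ref{eqMeanCurvRevSurfSpaceAxisXYcurve}) by $y'$ gives $2yy'H-(yx')'=0$; when $H$ is constant this integrates at once to
\[
y(s)\,x'(s) = H\,y(s)^2 + c
\]
for some constant $c\in\mathbb{R}$. Conversely, if $\gamma(s)=(x(s),y(s),0)$, $y>0$, is a unit speed $C^2$ curve satisfying $yx'=Hy^2+c$ with $H,c$ constant, then differentiating this relation and eliminating $x''$ and $y''$ through the arc-length identity $x'x''=-y'y''$ of Eq. (\ref{eqAddRelationsRevSurfSpaceAxisXYcurve}) recovers Eq. (\ref{eqMeanCurvRevSurfSpaceAxisXYcurve}) with the same constant $H$ (valid wherever $y'\neq0$, hence everywhere by continuity; the exceptional situation $y'\equiv0$, in which $\gamma$ is a line parallel to the axis, corresponds to a circular/hyperbolic cylinder and is handled separately below). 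So, apart from that degenerate case, $S_\gamma$ has constant mean curvature $H$ if and only if $\gamma$ is a unit speed solution of $yx'=Hy^2+c$ for some $c\in\mathbb{R}$.

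The second step is to recognize this first integral. Writing $x'=\cos\psi$ and $y'=\sin\psi$ for the Euclidean tangent angle $\psi$ of $\gamma$, the first integral becomes $y\cos\psi=Hy^2+c$, which is exactly the equation characterizing, in the Euclidean plane, the curve described by a focus of a conic that rolls without slipping along the $x$-axis \cite{DelaunayJMPA1841,EellsMI1987}: according to the (nonnegative) value of $H$ and the value of $c$ the conic is an ellipse, a parabola or a hyperbola, the corresponding roulette being an undulary, a catenary or a nodary, together with the degenerate cases (a circle, whose roulette is the axis-parallel line considered above and yields the cylinder). In particular $H=0$ forces $x'=c/y$, which integrates to a catenary, the roulette of a parabola, consistently with Theorem \ref{thr::PrescMCRevSurfSaxisXYcurv} specialized to $H\equiv0$. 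Since the equivalence between the ODE $y\cos\psi=Hy^2+c$ and ``$\gamma$ is a roulette of a conic in the $xy$-plane'' is a statement internal to the Euclidean $xy$-plane, combining it with the first step gives the theorem in both directions.

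The only substantive ingredient is this last classical equivalence. I would either quote it directly from \cite{DelaunayJMPA1841,EellsMI1987} or, to keep the argument self-contained, reprove it by parametrizing a conic about a focus in polar form $r=p/(1+e\cos\vartheta)$, rolling it on the $x$-axis so that arc length along the conic equals arc length along the line, computing the height of the focus above the line as a function of arc length, and checking that it satisfies the displayed ODE; the remaining bookkeeping---matching the pair $(e,p)$ to $(H,c)$ and treating the cases $c=0$ and $H=0$ separately---is routine.
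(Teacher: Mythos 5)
Your proof is correct, but it takes a more explicit route than the paper. The paper's own argument is essentially two lines: since the $xy$-plane is spacelike it is isometric to the Euclidean plane, and the mean curvature equation (\ref{eqMeanCurvRevSurfSpaceAxisXYcurve}) is \emph{literally identical} to Kenmotsu's Euclidean equation (Eq. (1) of \cite{KemmotsuTMJ1981}), so the classical theorem of Delaunay transfers verbatim. You instead rederive the Delaunay characterization from scratch: you extract the first integral $yx'=Hy^2+c$ from the identity $2yy'H-(yx')'=0$ of Subsection \ref{sec::MCforCurvOnXYplaneWithOXaxis}, verify the converse by differentiating and using $x'x''=-y'y''$ (correctly noting that this only recovers (\ref{eqMeanCurvRevSurfSpaceAxisXYcurve}) where $y'\neq0$ and handling the cylinder case separately), and then identify $y\cos\psi=Hy^2+c$ with the classical roulette equation. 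Both arguments ultimately lean on the same classical input --- the equivalence between that first integral and ``roulette of a conic'' --- which you quote from \cite{DelaunayJMPA1841,EellsMI1987} just as the paper implicitly does; the difference is that the paper buys brevity by citing the coincidence of ODEs wholesale, whereas your version makes the reduction self-contained at the level of the first integral and is more careful about the converse direction and the degenerate cases, which the paper's one-line proof glosses over.
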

\begin{proof}
Since the $xy$-plane is spacelike, its conics and roulettes are the same as in the Euclidean plane. Finally, due to the fact that the mean curvature equations in Euclidean space, Eq. (1) of \cite{KemmotsuTMJ1981}, and  in Lorentz-Minkowski space (\ref{eqMeanCurvRevSurfSpaceAxisXYcurve}) are the same, it follows that constant mean curvature surfaces of revolution with generating curve in a spacelike axis are obtained from the revolution of roulettes of a conic.
\end{proof}

\subsection{Rotation of a curve on a timelike plane around a lightlike axis}

Let $\lambda:I\to \mathbb{E}_1^3$ be a $C^2$ regular curve in the $yz$-plane, i.e., $\lambda(s)=(0,y(s),z(s))$ with $s$ arc-length and $y>z$. Considering a rotation of this curve along a lightlike axis given by $(0,1,1)$ results in the following surface of revolution
\begin{equation}
L(s,\theta) = \Big([y(s)-z(z)]\,\theta,y(s)-\frac{\theta^2}{2}[y(s)-z(s)],z(s)-\frac{\theta^2}{2}[y(s)-z(s)]\Big),\label{defRevSurfLightAxisYZcurve}
\end{equation}
where $\theta\in(-\infty,+\infty)$. Since $s$ is the arc-length parameter of $\lambda$, we can write
\begin{equation}
y'\,^2-z'\,^2=\eta\in\{-1,+1\}\,.
\end{equation}

The first fundamental form is given by
\begin{equation}
\mathrm{I} = \eta\,{\rm d}s^2+(y-z)^2\,{\rm d}\theta^2\,.\label{eq1stFFormRevSurfLightAxisYZcurve}
\end{equation}
Since $g=g_{11}g_{22}-g_{12}^2=\eta(y-z)^2\Rightarrow\epsilon=-\eta$, it follows that $L$ is a spacelike (timelike) surface if and only if $\lambda(s)$ is a spacelike (timelike) curve.

Writing the normal vector to $L$ as
\begin{equation}
N = \Big(-(y'-z')\,\theta,z'+\frac{\theta^2}{2}(y'-z'),y'+\frac{\theta^2}{2}(y'-z')\Big),
\end{equation}
the second fundamental form is given by
\begin{equation}
\mathrm{II} = (y''z'-y'z'')\,{\rm d}s^2+(y-z)(y'-z')\,{\rm d}\theta^2\,.\label{eq2ndFFormRevSurfLightAxisYZcurve}
\end{equation}
Since both I and II are diagonal, the shape operator $A=\mathrm{I}^{-1}\mathrm{II}$ is diagonalizable.

The mean curvature equation is then written as
\begin{equation}
2(y-z)H+(y-z)(y''z'-y'z'')+\eta\,(y'-z')=0\,.\label{eqMeanCurvRevSurfLightAxisYZcurve}
\end{equation}
Since $\lambda(s)$ is parametrized by arc-length, we have the additional equation
\begin{equation}
y'\,^2-z'\,^2 = \eta 
{ \,\Rightarrow\, }
y'y'' = z'z''.\label{eqAddRelationsRevSurfLightAxisYZcurve}
\end{equation}

Multiplying Eq. (\ref{eqMeanCurvRevSurfLightAxisYZcurve}) by $(y'-z')$ gives
\begin{equation}
2\eta\,(y-z)(y'-z')H+[(y-z)(y'-z')]'=0.
\end{equation}
Solving the above equation gives $(y-z)(y'-z')=a_0\,\mathrm{e}^{-2\eta\int H}$. Then, we have the relation $(y-z)\mathrm{d}(y-z)=a_0\,\mathrm{e}^{-2\eta\int H}\,\mathrm{d}s$ and, consequently,
\begin{equation}
y(s)-z(s)=\Big\{a_1+a_0\,\int_{0}^s\exp[-2\eta\int_{0}^uH(t)\,{\rm d}t\,]\,{\rm d}u\Big\}^{1/2}\,,\label{eqSolutiony-zHLightAxis}
\end{equation}
where $a_0$ and $a_1$ are constants to be specified at $s=0$. On the other hand, multiplying Eq. (\ref{eqMeanCurvRevSurfLightAxisYZcurve}) by $(y'+z')$ gives
\begin{equation}
2(y-z)(y'+z')H-\eta(y-z)(y''+z'')+1=0\Rightarrow2\eta H\,\frac{y'+z'}{y-z}-\left(\frac{y'+z'}{y-z}\right)'=0.
\end{equation}

The solution of the above equation gives $y'+z'=(y-z)b_0\mathrm{e}^{2\eta\int H}$. Then, we have 
\begin{equation}
y(s)+z(s)=b_1+b_0\int_{0}^sa(u)\exp\Big(2\eta\int_{0}^uH(t){\rm d}t\Big){\rm d}u\,,
\end{equation}
where $b_0$, $b_1$ are constants and $a(u)=y(u)-z(u)$, Eq. (\ref{eqSolutiony-zHLightAxis}). Finally, from the knowledge of $y+z$ and $y-z$ we can find the expressions for $y$ and $z$:
\begin{equation}
y(s)=\frac{\left(b_1+\sqrt{a_1+a_0\int_{0}^s\mathrm{d}u\,\mathrm{e}^{2\eta\int_{0}^u\mathrm{d}t\,H}}+b_0\int_{0}^s\mathrm{d}u\,\mathrm{e}^{2\eta\int_{0}^u\mathrm{d}t\,H}\sqrt{a_1+a_0\int_{0}^u\mathrm{d}t\,\mathrm{e}^{2\eta\int_{0}^t\mathrm{d}v\,H}}\right)}{2};
\end{equation}
\begin{equation}
z(s)=-\frac{\left(b_1-\sqrt{a_1+a_0\int_{0}^s\mathrm{d}u\,\mathrm{e}^{2\eta\int_{0}^u\mathrm{d}t\,H}}+b_0\int_{0}^s\mathrm{d}u\,\mathrm{e}^{2\eta\int_{0}^u\mathrm{d}t\,H}\sqrt{a_1+a_0\int_{0}^u\mathrm{d}t\,\mathrm{e}^{2\eta\int_{0}^t\mathrm{d}v\,H}}\right)}{2}.
\end{equation}

\section{Prescribed skew curvature equation in Lorentz-Minkowski space}

We now address the problem of prescribed skew curvature for surfaces of revolution with a non-lightlike axis, as depicted in Figures 1(a), 1(b), and 1(c). Following da Silva \emph{et al.} \cite{DaSilvaAP2017}, the strategy consists in considering the generating curve as a graph and then write the equation for the skew curvature, which is initially a nonlinear second order ODE, as a linear first order ODE (with real coefficients). This approach can be seen as an adaptation of the techniques presented in \cite{BaikoussisJGeom} and \cite{BenekiJMAA2002} for helicoidal surfaces with prescribed mean/Gaussian curvature in Euclidean and Lorentz-Minkowski spaces, respectively. Unfortunately, we were not able to solve the $S$ prescribed problem for surfaces of revolution with a lightlike axis, Figure 1(d), with the present technique.

It is worth mentioning that in Ref. \cite{DaSilvaAP2017}, the authors point to the fact that a curve which is a graph in the $xz$-plane, say $\alpha(u)=(u,0,z(u))$, can be rotated around either the $x$- or $z$-axis. Nonetheless, these two possibilities lead to the same answer for the prescribed skew curvature problem in $\mathbb{E}^3$. Notice, however, that a priori these equivalent procedures do not make sense in $\mathbb{E}_1^3$, since distinct choices for the rotation axis lead to distinct types of surfaces (see Table 1). Instead, we should fix the axis and consider a curve as a graph in two ways (see subsections below).

\subsection{Rotation of a curve on a timelike plane around a timelike axis}

Let $\alpha:I\to \mathbb{E}_1^3$ be a $C^2$ regular curve in the $xz$-plane, i.e., $\alpha(u)=(x(u),0,z(u))$ with $x>0$. Considering a rotation of this curve around the timelike axis given by $(0,0,1)$ results in the following surface of revolution
\begin{equation}
Z(u,\theta) = (x(u)\cos\theta,x(u)\sin\theta,z(u)),
\end{equation}
where $\theta\in(0,2\pi)$. The causal character of $\alpha$ can be denoted through
\begin{equation}
\eta = \mathrm{sgn}(\langle\alpha',\alpha'\rangle) = \mathrm{sgn}(x'\,^2-z'\,^2)\in\{-1,1\}.
\end{equation}

The first fundamental form I is given by
\begin{equation}
\mathrm{I} = (x'\,^2-z'\,^2)\, {\rm d}u^2+x^2\,{\rm d}\theta^2\,.\label{eq1stFFormRevSurfTimeAxisXZGraphcurve}
\end{equation}
Since $\langle Z_u\times Z_{\theta},Z_u\times Z_{\theta}\rangle=-x^2(x'\,^2-z'\,^2)$, we have $\epsilon=-\eta$ and the normal to $Z(u,\theta)$ is
\begin{equation}
N = -\frac{1}{\sqrt{\eta(x'\,^2-z'\,^2)}}(z'\cos\theta,z'\sin\theta,x').
\end{equation}
The second fundamental form II is given by
\begin{equation}
\mathrm{II} = \frac{x'z''-x''z'}{\sqrt{\eta(x'\,^2-z'\,^2)}}\,{\rm d}u^2+\frac{xz'}{\sqrt{\eta(x'\,^2-z'\,^2)}}\,{\rm d}\theta^2\,.\label{eq2ndFFormRevSurfTimeAxisXZGraphcurve}
\end{equation}

\subsubsection{Generating curve as a graph with $x$ as independent variable}
Let $\alpha(u)=(u,0,z(u))$ be a graph with the $x$-direction as the independent variable. Here, the mean and Gaussian curvatures are
\begin{equation}
H = -\frac{uz''+z'(1-z'\,^2)}{2u[\eta(1-z'\,^2)]^{3/2}}\mbox{ and }K=-\frac{z'z''}{u(1-z'\,^2)^2}\,,
\end{equation}
respectively. Defining 
\begin{equation}
A=\frac{z'}{u\sqrt{\eta (1-z'\,^2)}}\mbox{ and } B=\frac{z'\,^2}{(1-z'\,^2)},\label{eq::AandBRevSurfTimeAxisXZGraphcurve}
\end{equation}
the Gaussian and mean curvatures  can be respectively written as the linear equations
\begin{equation}
A'+\frac{2}{u}A=-\eta\,\frac{2}{u}H\mbox{ and }B'=-2uK.\label{eq::HandKUsingAandBRevSurfTimeAxisXZGraphcurve}
\end{equation}
In addition, observing that $B=\eta\,u^2 A^2$,  we can write
\begin{equation}
S^2=H^2-\epsilon K = \left(A+\frac{u}{2}A'\right)^2-\left(A^2+uAA'\right) = \frac{u^2}{4}A'\,^2\,.
\end{equation}

The function $A(u)$ can be written in terms of the skew curvature $S$ as
\begin{equation}
A(u) = \pm\, 2\int \frac{S(v)}{v}\,\mathrm{d}v+a_0\,,
\end{equation}
where $a_0$ is a constant of integration. Now, using the expression for $A$ in Eq. (\ref{eq::AandBRevSurfTimeAxisXZGraphcurve}), we find that
\begin{equation}
z'\,^2=\frac{\eta\,u^2A^2}{1+\eta\,u^2A^2} \Rightarrow z(u) = \pm\int\frac{vA(v)}{\sqrt{\eta+v^2A^2(v)}}\,\mathrm{d}v+z_0\,,\label{EqGraphuZWithZaxis}
\end{equation}
where $z_0$ is another constant of integration.

\subsubsection{Generating curve as a graph with $z$ as independent variable}
Let $\alpha(u)=(x(u),0,u)$ be a graph with the $z$-direction as the independent variable. Here, the mean and Gaussian curvatures are
\begin{equation}
H = \frac{1-x'\,^2+xx''}{2x[\eta(x'\,^2-1)]^{3/2}}\mbox{ and }K=\frac{x''}{x(x'\,^2-1)^2}\,,
\end{equation}
respectively. Defining
\begin{equation}
A(u) = \frac{1}{x\sqrt{\eta(x'\,^2-1)}},
\end{equation}
we can write
\begin{equation}
\pm S = \frac{1-x'\,^2-xx''}{2x[\eta(x'\,^2-1)]^{3/2}}=\eta\,\frac{x}{2x'}\frac{\mathrm{d}A}{\mathrm{d}u}\,.
\end{equation}
Then, we have the following ODE for A
\begin{equation}
x\frac{\mathrm{d}A}{\mathrm{d}u}\mp 2\eta S\frac{\mathrm{d}x}{\mathrm{d}u}=\left(x\frac{\mathrm{d}A}{\mathrm{d}x}\mp 2\eta S\right)\frac{\mathrm{d}x}{\mathrm{d}u}=0\,.
\end{equation}
If $x'(u)\equiv0$ we have a cylinder. Otherwise, we are led to 
\begin{equation}
x\frac{\mathrm{d}A}{\mathrm{d}x}\mp 2\eta S=0\Rightarrow A=\pm2\eta\int\frac{S(v)}{v}\mathrm{d}v+a_1\,.
\end{equation}

Now, using the definition of $A$ above, we finally find that
\begin{equation}
\frac{\mathrm{d}x}{\mathrm{d}u}=\pm\sqrt{\frac{\eta+x^2A^2}{x^2A^2}}\Rightarrow u(x)=\pm\int\frac{vA(v)}{\sqrt{\eta+v^2A(v)^2}}\mathrm{d}v+u_0\,.
\end{equation}
Observe that the equation above is identical to Eq. (\ref{EqGraphuZWithZaxis}), but instead of finding $x(u)$ as a function of $u$ we found its inverse. This shows that a graph of a solution $f(u)$ of Eq. (\ref{EqGraphuZWithZaxis}) gives rise to a surface of revolution with prescribed $S$ with either $x$- or the $z$-axis as the independent variable direction, i.e., we can choose either $\alpha(u)=(u,0,f(u))$ or $\alpha(u)=(f(u),0,u)$ to rotate around the timelike $z$-axis. The only difference between these two choices lies in the causal character of $\alpha$.

\subsection{Rotation of a curve on a timelike plane around a spacelike axis}

Let $\beta:I\to \mathbb{E}_1^3$ be a $C^2$ regular curve in the $xz$-plane, i.e., $\beta(u)=(x(u),0,z(u))$ with $z>0$. Considering a rotation of this curve around the spacelike axis given by $(1,0,0)$ results in the following surface of revolution
\begin{equation}
X_I(u,\theta) = (x(u),z(u)\sinh\theta,z(u)\cosh\theta),
\end{equation}
where $\theta\in(-\infty,\infty)$. The causal character of $\beta$ can be described through
\begin{equation}
\eta = \mathrm{sgn}(\langle\beta',\beta'\rangle) = \mathrm{sgn}(x'\,^2-z'\,^2)\in\{-1,1\}.
\end{equation}

The first fundamental form I is given by
\begin{equation}
\mathrm{I} = (x'\,^2-z'\,^2)\, {\rm d}u^2+z^2\,{\rm d}\theta^2\,.\label{eq1stFFormRevSurfSpaceAxisXZGraphcurve}
\end{equation}
Since $\langle \partial_uX_I\times \partial_{\theta}X_I,\partial_uX_I\times \partial_{\theta}X_I\rangle=-z^2(x'\,^2-z'\,^2)$, we have $\epsilon=-\eta$ and the normal to $X_I(u,\theta)$ is
\begin{equation}
N = -\frac{1}{\sqrt{\eta(x'\,^2-z'\,^2)}}(z',x'\sinh\theta,x'\cosh\theta).
\end{equation}
The second fundamental form II is given by
\begin{equation}
\mathrm{II} = \frac{x'z''-x''z'}{\sqrt{\eta(x'\,^2-z'\,^2)}}\,{\rm d}u^2+\frac{x'z}{\sqrt{\eta(x'\,^2-z'\,^2)}}\,{\rm d}\theta^2\,.\label{eq2ndFFormRevSurfSpaceAxisXZGraphcurve}
\end{equation}

\subsubsection{Generating curve as a graph with $z$ as independent variable}
Let $\beta(u)=(x(u),0,u)$ be a graph with the $z$-direction as the independent variable. Here, the mean and Gaussian curvatures are
\begin{equation}
H = \frac{ux''-x'(x'\,^2-1)}{2u[\eta(x'\,^2-1)]^{3/2}}\mbox{ and }K=\frac{x'x''}{u(x'\,^2-1)^2}\,,
\end{equation}
respectively. Defining 
\begin{equation}
A=\frac{x'}{u\sqrt{\eta (x'\,^2-1)}}\mbox{ and } B=\frac{x'\,^2}{(x'\,^2-1)}\,,\label{eq::AandBRevSurfSpaceAxisXZGraphcurve}
\end{equation}
the Gaussian and mean curvatures can be respectively written as the linear equations
\begin{equation}
A'+\frac{2}{u}A=-\eta\,\frac{2}{u}H\mbox{ and }B'=-2uK\,.\label{eq::HandKUsingAandBRevSurfSpaceAxisXZGraphcurve}
\end{equation}

Observing that the equations above are analogous to those of $Z$, Eqs. (\ref{eq::AandBRevSurfTimeAxisXZGraphcurve}) and (\ref{eq::HandKUsingAandBRevSurfTimeAxisXZGraphcurve}), we have
\begin{equation}
A(u) = \pm\, 2\int \frac{S(v)}{v}\,\mathrm{d}v+a_0\,,
\end{equation}
where $a_0$ is a constant of integration. Now, using the expression for $A$ in Eq. (\ref{eq::AandBRevSurfSpaceAxisXZGraphcurve}), we find that
\begin{equation}
x'\,^2=-\frac{\eta\,u^2A^2}{1-\eta\,u^2A^2}\Rightarrow x(u) = \pm\int\frac{vA(v)}{\sqrt{-\eta+v^2A^2(v)}}\,\mathrm{d}v+x_0\,,\label{EqGraphuZWithXaxis}
\end{equation}
where $x_0$ is another constant of integration. 

\subsubsection{Generating curve as a graph with $x$ as independent variable}
Let $\beta(u)=(u,0,z(u))$ be a graph with the $x$-direction as the independent variable. Here, the mean and Gaussian curvatures are
\begin{equation}
H = -\frac{1-z'\,^2+zz''}{2z[\eta(1-z'\,^2)]^{3/2}}\mbox{ and }K=-\frac{z''}{z(1-z'\,^2)^2}\,,
\end{equation}
respectively. Defining
\begin{equation}
A(u) = \frac{1}{z\sqrt{\eta(1-z'\,^2)}},
\end{equation}
we can write
\begin{equation}
\pm S = \frac{1-z'\,^2-zz''}{2z[\eta(1-z'\,^2)]^{3/2}}=-\eta\,\frac{z}{2z'}\frac{\mathrm{d}A}{\mathrm{d}u}\,.
\end{equation}
Then, we have the following ODE for A
\begin{equation}
z\frac{\mathrm{d}A}{\mathrm{d}u}\pm 2\eta S\frac{\mathrm{d}z}{\mathrm{d}u}=\left(z\frac{\mathrm{d}A}{\mathrm{d}z}\pm 2\eta S\right)\frac{\mathrm{d}z}{\mathrm{d}u}=0\,.
\end{equation}
If $z'(u)\equiv0$ we have a cylinder. Otherwise, we are led to 
\begin{equation}
z\frac{\mathrm{d}A}{\mathrm{d}z}\pm 2\eta S=0\Rightarrow A=\pm2\eta\int\frac{S(v)}{v}\mathrm{d}v+a_1\,.
\end{equation}

Now, using the definition of $A$ above, we finally find that
\begin{equation}
\frac{\mathrm{d}z}{\mathrm{d}u}=\pm\sqrt{\frac{-\eta+z^2A^2}{z^2A^2}}\Rightarrow u(z)=\pm\int\frac{vA(v)}{\sqrt{-\eta+v^2A(v)^2}}\mathrm{d}v+u_0\,.
\end{equation}
Observe that the equation above is identical to Eq. (\ref{EqGraphuZWithXaxis}), but instead of finding $z(u)$ as a function of $u$ we found its inverse. This shows that a graph of a solution $f(u)$ of Eq. (\ref{EqGraphuZWithXaxis}) gives rise to a surface of revolution with prescribed $S$ with either $x$- or the $z$-axis as the independent variable direction, i.e., we can choose either $\beta(u)=(u,0,f(u))$ or $\beta(u)=(f(u),0,u)$ to rotate around the spacelike $x$-axis. The only difference between these two choices lies in the causal character of $\beta$.

\subsection{Rotation of a curve on a spacelike plane around a spacelike axis}
Let $\gamma:I\to \mathbb{E}_1^3$ be a $C^2$ regular curve in the $xy$-plane, i.e., $\gamma(u)=(x(u),y(u),0)$ with $y>0$. Considering a rotation of this curve around the (spacelike) $x$-axis results in the following surface of revolution
\begin{equation}
X_{II}(u,\theta) = (x(u),y(u)\cosh\theta,y(u)\sinh\theta),
\end{equation}
where $\theta\in(-\infty,\infty)$. The curve $\gamma$ is always spacelike, since
\begin{equation}
\langle\gamma',\gamma'\rangle=(x'\,^2+y'\,^2)>0.
\end{equation}

The first fundamental form I is given by
\begin{equation}
\mathrm{I} = (x'\,^2+y'\,^2)\, {\rm d}u^2-y^2\,{\rm d}\theta^2\,,\label{eq1stFFormRevSurfSpaceAxisXYGraphcurve}
\end{equation}
the normal to $X_{II}(u,\theta)$ is
\begin{equation}
N = \frac{1}{\sqrt{x'\,^2+y'\,^2}}(y',-x'\cosh\theta,-x'\sinh\theta),
\end{equation}
and the second fundamental form II is
\begin{equation}
\mathrm{II} = \frac{x''y'-x'y''}{\sqrt{x'\,^2+y'\,^2}}\,{\rm d}u^2-\frac{x'y}{\sqrt{x'\,^2+y'\,^2}}\,{\rm d}\theta^2\,.\label{eq2ndFFormRevSurfSpaceAxisXYGraphcurve}
\end{equation}

\subsubsection{Generating curve as a graph with $y$ as independent variable}
Let $\gamma(u)=(x(u),u,0)$ be a graph with the $y$-direction as the independent variable. Here, the mean and Gaussian curvatures are
\begin{equation}
H = \frac{ux''+x'(1+x'\,^2)}{2u(1+x'\,^2)^{3/2}}\mbox{ and }K=\frac{x'x''}{u(1+x'\,^2)^2}\,,
\end{equation}
respectively. Defining 
\begin{equation}
A=\frac{x'}{u\sqrt{(1+x'\,^2)}}\mbox{ and } B=\frac{x'\,^2}{(1+x'\,^2)}\,,\label{eq::AandBRevSurfSpaceAxisXYGraphcurve}
\end{equation}
the Gaussian and mean curvatures can be respectively written as the linear equations
\begin{equation}
A'+\frac{2}{u}A=\frac{2}{u}H\mbox{ and }B'=2uK\,.\label{eq::HandKUsingAandBRevSurfSpaceAxisXYGraphcurve}
\end{equation}

In addition, observing that $B=u^2 A^2$,  we can write
\begin{equation}
S^2=H^2-K = \left(A+\frac{u}{2}A'\right)^2-\left(A^2+uAA'\right) = \frac{u^2}{4}A'\,^2\,.
\end{equation}

The function $A(u)$ can be written in terms of the skew curvature $S$ as
\begin{equation}
A(u) = \pm\, 2\int \frac{S(v)}{v}\,\mathrm{d}v+a_0\,,
\end{equation}
where $a_0$ is a constant of integration. Now, using the expression for $A$ in Eq. (\ref{eq::AandBRevSurfSpaceAxisXYGraphcurve}), we find that
\begin{equation}
x'\,^2=\frac{u^2A^2}{1-u^2A^2} \Rightarrow x(u) = \pm\int\frac{vA(v)}{\sqrt{1-v^2A^2(v)}}\,\mathrm{d}v+x_0\,,\label{EqGraphuYWithYaxis}
\end{equation}
where $x_0$ is another constant of integration.

\subsubsection{Generating curve as a graph with $x$ as independent variable}
Let $\gamma(u)=(u,y(u),0)$ be a graph with the $x$-direction as the independent variable. Here, the mean and Gaussian curvatures are
\begin{equation}
H = \frac{1+y'\,^2-yy''}{2y(1+y'\,^2)^{3/2}}\mbox{ and }K=-\frac{y''}{y(1+y'\,^2)^2}\,,
\end{equation}
respectively. Defining
\begin{equation}
A(u) = \frac{1}{y\sqrt{1+y'\,^2}},
\end{equation}
we can write
\begin{equation}
\pm S = \frac{1+y'\,^2+yy''}{2y(1+y'\,^2)^{3/2}}=-\frac{y}{2y'}\frac{\mathrm{d}A}{\mathrm{d}u}\,.
\end{equation}
Then, we have the following ODE for A
\begin{equation}
y\frac{\mathrm{d}A}{\mathrm{d}u}\pm 2 S\frac{\mathrm{d}y}{\mathrm{d}u}=\left(y\frac{\mathrm{d}A}{\mathrm{d}y}\pm 2 S\right)\frac{\mathrm{d}y}{\mathrm{d}u}=0\,.
\end{equation}
If $y'(u)\equiv0$ we have a cylinder. Otherwise, we are led to 
\begin{equation}
y\frac{\mathrm{d}A}{\mathrm{d}y}\pm 2 S=0\Rightarrow A(u)=\pm2\int\frac{S(v)}{v}\mathrm{d}v+a_1\,.
\end{equation}

Now, using the definition of $A$ above, we finally find that
\begin{equation}
\frac{\mathrm{d}y}{\mathrm{d}u}=\pm\sqrt{\frac{1-y^2A^2}{y^2A^2}}\Rightarrow u(y)=\pm\int\frac{vA(v)}{\sqrt{1-v^2A(v)^2}}\mathrm{d}v+u_0\,.
\end{equation}
Observe that the equation above is identical to Eq. (\ref{EqGraphuYWithYaxis}), but instead of finding $y(u)$ as a function of $u$ we found its inverse. This shows that a graph of a solution $f(u)$ of Eq. (\ref{EqGraphuYWithYaxis}) gives rise to a surface of revolution with prescribed $S$ with either $x$- or the $y$-axis as the independent variable direction, i.e., we can choose either $\gamma(u)=(u,f(u),0)$ or $\gamma(u)=(f(u),u,0)$ to rotate around the spacelike $x$-axis.  Notice that the causal character of $\gamma$, and the respective surface of revolution, does not depend on this choice.

\appendix

\section{Lorentz numbers}

The ring of \emph{Lorentz numbers} $\mathbb{L}$, often named double or hyperbolic numbers \cite{Yaglom1979}, is $\mathbb{L}=\{a+b\tau:a,b\in\mathbb{R},\tau\not\in\mathbb{R},\mbox{ and }\tau^2=1\}$, which is isomorphic to $\mathbb{R}[X]/(X^2-1)$, where $\mathbb{R}[X]$ is the ring of real polynomials. The sum and product in the (commutative) ring $\mathbb{L}$ is defined as usual: $(a+b\tau)+(\alpha+\beta\tau)=(a+\alpha)+(b+\beta)\tau$ and $(a+b\tau)(\alpha+\beta\tau)=(a\alpha+b\beta)+(a\beta+b\alpha)\tau$. Moreover, $\mathbb{L}$ does not form a field, since $(a\pm a\tau)^2=0$ {for all $a$}.

It is also possible to define a conjugation of $w=a+b\tau\in\mathbb{L}$ as usual $\bar{w}=a-b\tau$. Consequently, if $a^2-b^2\not=0$, then $w^{-1}=\bar{w}/(w\bar{w})$, where $w\bar{w}=a^2-b^2\in\mathbb{R}$. A Lorentz number $w$ is \emph{space-}, \emph{time-}, or \emph{light-like} if $w\bar{w}>0$ or $w=0$, $w\bar{w}<0$, or $w\bar{w}=0$, respectively. The lightlike numbers are precisely the zero divisors of $\mathbb{L}$ and are of the form $a\pm a\tau$. The set of invertible Lorentz numbers is $\mathbb{L}^{^*}=\{w\in\mathbb{L}:\exists\,w^{-1}\}=\mathbb{L}-{\{a\pm a\tau\}}$.

In addition, the Lorentz numbers admit the linear representation
\begin{equation}
a+b\tau \mapsto \left(\begin{array}{cc}
a & b\\
b & a\\
\end{array}\right)\,,
\end{equation}
from which we can define a polar form of a Lorentz number $w\in\mathbb{L}^{^*}$ to be 
\begin{equation}
a+b\tau=\left\{
\begin{array}{c}
r(\cosh\,\theta+\tau\sinh\,\theta),\mbox{ if }a^2-b^2>0\\
r(\sinh\,\theta+\tau\cosh\,\theta),\mbox{ if }a^2-b^2<0\\
\end{array}
\right.\,,
\end{equation}
where $r=\sqrt{\vert w\bar{w}\vert}=\sqrt{\vert a^2-b^2\vert}$ is the length of $w$. We also define an 
exponential function
\begin{equation}
\exp(a+b\tau)=\mathrm{e}^{a+b\tau}=
\mathrm{e}^a(\cosh\,b+\tau\sinh\,b)\,.
\end{equation}

Given a function $f(s)=a(s)+b(s)\tau$, where $a,b$ are differentiable real functions of a real variable $s$, it is easy to verify using the expressions above that
\begin{equation}
\frac{\mathrm{d}}{\mathrm{d}s}\mathrm{e}^{f(s)}=[a'(s)+b'(s)\tau]\,\mathrm{e}^{f(s)}=f'(s)\mathrm{e}^{f(s)}\,.
\end{equation}
\begin{example}
The solution of the linear ODE $\mathrm{d}w/\mathrm{d}t+g(s)\tau\, w(s)=0$ with initial condition $w(s_0)=w_0$, where $s$ and $g(s)$ are real, is given by
\begin{equation}
\label{eq::SolutionLorentzianHomogenODE}
w(s)=w_0\,\exp\left(\tau\displaystyle\int_{s_0}^sg(u)\mathrm{d}u\right)\,.
\end{equation}
Notice that this ODE is equivalent to the system
\begin{equation}
\left\{\begin{array}{c}
x'+g(s)y=0\\
y'+g(s)x=0\\
\end{array}
\right.,\,x(s_0)=\mbox{Re}(w_0),\,y(s_0)=\mbox{Im}(w_0).
\end{equation}
\end{example}

\begin{remark}
It is possible to define a notion of differentiability for  functions $f:\mathbb{L}\to\mathbb{L}$ as done, e.g., in Ref. \cite{KonderakDGDS2014}. {(In fact, it is possible to introduce a notion of differentiability for functions over any algebra, see e.g. \cite{GadeaPAMS1996}.)} However, the few concepts and formalism introduced in this Appendix suffice for our purposes. 
\end{remark}

\end{document}